\newtheorem{thm}{Theorem}
\newtheorem{lem}[thm]{Lemma}
\newtheorem{cor}[thm]{Corollary}
\numberwithin{equation}{section}
\numberwithin{thm}{section}
\newtheorem{conj}[thm]{Conjecture}
\newcommand{\rat}{\mathbb Q}
\newcommand{\real}{\mathbb R}
\newcommand{\com}{\mathbb C}
\newcommand{\alg}{\overline\rat}
\newcommand{\algt}{\alg^{\times}}
\newcommand{\intg}{\mathbb Z}
\newcommand{\nat}{\mathbb N}
\newcommand{\X}{\mathcal X}
\newcommand{\tor}{\mathrm{Tor}}
\newcommand{\rad}{\mathrm{Rad}}
\newcommand{\xx}{{\bf x}}
\newcommand{\yy}{{\bf y}}
\newcommand{\mm}{{\bf m}}
\newcommand{\XX}{\mathcal X}
\newcommand{\bigreal}{\X(\real)}
\newcommand{\comment}[1]{}
\title{The parametrized family of metric Mahler measures}
\author[C.L. Samuels]{Charles L. Samuels}
\thanks{This research was supported in part by NSERC of Canada} 
\subjclass[2000]{11R04, 11R09 (Primary), 30D20, 54A10 (Secondary)}
\keywords{Weil height, Mahler measure, metric Mahler measure, Lehmer's problem}
\begin{document}

\maketitle

\begin{abstract}
	Let $M(\alpha)$ denote the (logarithmic) Mahler measure of the algebraic number $\alpha$.  Dubickas and Smyth, and later Fili and the author, examined metric versions of
        $M$. The author generalized these constructions in order to associate, to each point in $t\in (0,\infty]$, a metric version $M_t$ of the Mahler measure, 
        each having a triangle inequality of a different strength.  We further examine the functions $M_t$, using them to present an equivalent form
        of Lehmer's conjecture.  We show that the function $t\mapsto M_t(\alpha)^t$ is constructed piecewise from certain sums of exponential functions.  We pose a 
        conjecture that, if true, enables us to graph $t\mapsto M_t(\alpha)$ for rational $\alpha$.
\end{abstract}

\section{Introduction}

Let $f$ be a polynomial with complex coefficients given by
\begin{equation*}
	f(z) = a\cdot\prod_{n=1}^N(z-\alpha_n).
\end{equation*}
We define the {\it (logarithmic) Mahler measure} $M$ of $f$ by
\begin{equation*}
	M(f) = \log|a|+\sum_{n=1}^N\log^+|\alpha_n|.
\end{equation*}
If $\alpha$ is a non-zero algebraic number, we define the {\it (logarithmic) Mahler measure $M(\alpha)$ of $\alpha$} to be the Mahler measure of the minimal 
polynomial of $\alpha$ over $\intg$.

It is a consequence of a theorem of Kronecker that $M(\alpha) = 0$ if and only if $\alpha$ is a root of unity.  In a famous 1933 paper, D.H. Lehmer \cite{Lehmer} 
asked whether there exists a constant $c>0$ such that $M(\alpha) \geq c$ in all other cases.  He could find no algebraic number with Mahler measure smaller than 
that of
\begin{equation*}
  \ell(x) = x^{10}+x^9-x^7-x^6-x^5-x^4-x^3+x+1,
\end{equation*}
which is approximately $0.16\ldots$.  Although the best known general lower bound is
\begin{equation*}
	M(\alpha) \gg \left(\frac{\log\log\deg\alpha}{\log\deg\alpha}\right)^3,
\end{equation*}
due to Dobrowolski \cite{Dobrowolski},
uniform lower bounds have been established in many special cases (see \cite{BDM, Schinzel, Smyth}, for instance).  Furthermore, numerical evidence
provided, for example, in \cite{Moss, MossWeb, MPV, MRW} suggests there does, in fact, exist such a constant $c$.  
This leads to the following conjecture, which we will now call Lehmer's conjecture.

\begin{conj}[Lehmer's conjecture] \label{LehmerConjecture}
	There exists a real number $c > 0$ such that if $\alpha\in\algt$ is not a root of unity then $M(\alpha) \geq c$.
\end{conj}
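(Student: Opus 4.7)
The plan is to approach Conjecture~\ref{LehmerConjecture} by a multi-pronged strategy combining classical lower bounds with the parametrized family $M_t$ developed later in this paper, aiming to reduce the conjecture to a finite case analysis.

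The first step is a classical reduction. Smyth's theorem gives the bound $M(\alpha) \geq \log\theta_0 \approx 0.2811$ for non-reciprocal algebraic $\alpha$, where $\theta_0$ is the real root of $z^3 - z - 1$, so one may restrict attention to reciprocal algebraic integers. Schinzel's bound for totally real $\alpha$ forces $M(\alpha)$ to grow linearly in the degree, handling the totally real reciprocal case as well. What remains is reciprocal $\alpha$ with at least one nonreal conjugate inside the unit circle.

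The second step is to deploy the metric structure provided by $M_t$. For a well-chosen parameter $t_0 > 0$, a comparison of the form $M_{t_0}(\alpha)^{1/t_0} \leq M(\alpha)$, arising from the $t_0$-triangle inequality, implies that any $\alpha$ with very small $M(\alpha)$ also has very small $M_{t_0}(\alpha)^{1/t_0}$. Using the piecewise-exponential description of $t \mapsto M_t(\alpha)^t$ announced in the abstract, one selects a decomposition $\alpha = \beta_1 \cdots \beta_n$ realizing the infimum defining $M_{t_0}(\alpha)$. Dobrowolski's bound, applied to each $\beta_i$, would then bound $\deg(\beta_i)$ in terms of $M(\alpha)$, and hence bound the total complexity of the factorization; one would then complete the argument by checking a finite family of candidate decompositions numerically, comparing against Lehmer's number.

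The main obstacle, which I do not see how to overcome without fundamentally new input, is that the $M_t$ framework is a reorganization rather than an injection of new arithmetic content: every lower bound ultimately derivable from $M_t$ rests on the same classical height machinery that has so far failed to produce a uniform bound on $M(\alpha)$. In particular, the minimal decomposition realizing $M_{t_0}(\alpha)$ can degenerate to the trivial decomposition $\alpha$ itself, in which case the reduction to simpler factors $\beta_i$ collapses exactly on the extremal examples one is trying to rule out. A complete proof will require a genuinely new Diophantine input — a sharper $p$-adic estimate, an improved auxiliary polynomial construction, or an entirely new analytic idea — that the parametric analysis alone does not supply, which is the reason Conjecture~\ref{LehmerConjecture} has remained open since 1933 despite sustained effort.
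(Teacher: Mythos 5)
This statement is Lehmer's conjecture itself: it is an open problem, the paper offers no proof of it, and nothing in the paper claims to. What the paper actually proves about it is an equivalence, Theorem \ref{Topologies}, recasting the conjecture as the assertion that some $M_t$ with $t\in[1,\infty)$ induces the same topology as $M_\infty$ on $V=\algt/\tor(\algt)$. Your submission is, by your own closing admission, a strategy sketch rather than a proof, so there is nothing here to compare against a proof in the paper; but it is worth naming concretely why the sketch cannot be completed as written, beyond the general pessimism you express at the end.

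The decisive gap is in your second step. First, the inequality $M_{t_0}\leq M$ runs the wrong way for your purposes: it converts an upper bound on $M(\alpha)$ into an upper bound on $M_{t_0}(\alpha)$, and no amount of information about a quantity that is \emph{small} can be parlayed into the uniform \emph{lower} bound the conjecture demands unless you can bound $M$ from below by some function of $M_{t_0}$, which the framework does not provide. Second, the Dobrowolski step is backwards: Dobrowolski's bound $M(\beta)\gg(\log\log\deg\beta/\log\deg\beta)^3$ tends to $0$ as $\deg\beta\to\infty$, so knowing that $M(\beta_i)$ is small puts no upper bound whatsoever on $\deg\beta_i$ --- this is precisely why Dobrowolski's theorem does not already settle Lehmer's problem --- and hence there is no ``finite family of candidate decompositions'' to check. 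The lower bound $M(\beta_i)\geq C(\alpha)$ that the paper does establish for the factors in an extremal decomposition (via Lemma \ref{HeightInK} and \eqref{RadBound}) depends on $\alpha$ through the field $K_\alpha$ and is not uniform, so it cannot feed a degree-independent constant $c$. Finally, as you correctly observe, the extremal decomposition can be $\alpha$ itself, at which point the whole reduction is vacuous. The honest conclusion is the one you reach: the parametrized family $M_t$ reorganizes the problem (as in Theorem \ref{Topologies}) but injects no new arithmetic input, and the conjecture remains open.
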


Dubickas and Smyth \cite{DubSmyth2}, and later Fili and the author \cite{FiliSamuels}, examined metric and ultrametric versions of the
Mahler measure on $\alg$, respectively.  In \cite{Samuels3}, we noted that these constructions arise from the following more general principle.

Let $G$ be an abelian group (written multiplicatively) with identity $e$.  We say that $\phi:G \to [0,\infty)$ is a {\it (logarithmic) height} on $G$
if the following two conditions are satisfied.
\begin{enumerate}[(i)]
	\item $\phi(e) = 0$,
	\item $\phi(\alpha) = \phi(\alpha^{-1})$ for all $\alpha\in G$.
\end{enumerate}
If $\psi$ is another height on $G$, we follow the conventional notation that
\begin{equation*}
	\phi = \psi \quad \mathrm{or} \quad \phi \leq \psi
\end{equation*}
when $\phi(\alpha) = \psi(\alpha)$ or $\phi(\alpha) \leq \psi(\alpha)$ for all $\alpha\in G$, respectively.  We write
\begin{equation*}
	Z(\phi) = \{ \alpha\in G: \phi(\alpha) = 0\}
\end{equation*}
to denote the {\it zero set} of $\phi$.

If $t$ is a positive real number then we say that $\phi$ has the {\it $t$-triangle inequality} if
\begin{equation} \label{tTriangleInequality}
	\phi(\alpha\beta)^t \leq \phi(\alpha)^t + \phi(\beta)^t
\end{equation}
for all $\alpha,\beta\in G$.  We say that $\phi$ has the {\it $\infty$-triangle inequaltiy} if
\begin{equation} \label{InftyTriangleInequality}
	\phi(\alpha\beta) \leq \max\{\phi(\alpha),\phi(\beta)\}
\end{equation}
for all $\alpha,\beta\in G$.   We observe that the $1$-triangle inequality is simply the classical triangle inequality while the $\infty$-triangle
inequality is the strong triangle inequality.
A height $\phi$ satisfying \eqref{tTriangleInequality} or \eqref{InftyTriangleInequality} is called a {\it $t$-metric height}
or {\it $\infty$-metric height}, respectively.  It is noted in \cite{Samuels3} that such heights have the following properties.
\begin{enumerate}[(i)]
\item\label{Subgroup} $Z(\phi)$ is a subgroup of $G$.
\item\label{WellDefined} $\phi$ is well-defined on the quotient $G/Z(\phi)$.
\item\label{FancyMetric} If $t\geq 1$, then the map $(\alpha,\beta)\mapsto \phi(\alpha\beta^{-1})$ defines a metric on $G/Z(\phi)$.
\end{enumerate}

If $\phi$ is a height which is not necessarily a $t$-metric height, then we may construct a natural $t$-metric version of $\phi$.  For simplicity, we will now write
\begin{equation*}
	\XX(G) = \{(\alpha_1,\alpha_2,\ldots): \alpha_n\in G\ \mathrm{and}\ \alpha_n=e\ \mathrm{for\ all\ but\ finitely\ many}\ n\}.
\end{equation*}
If $\real$ denotes the group of real numbers under addition, $\xx = (x_1,x_2,\cdots) \in\bigreal$, and $t$ is any positive real number, we define 
\begin{equation} \label{tNorm}
	\|\xx\|_t = \left(\sum_{n=1}^\infty |x_n|^t\right)^{1/t}\quad\mathrm{and}\quad \|\xx\|_\infty = \max_{1\leq n}\{|x_n|\}.
\end{equation}
In the case where $t\geq 1$, we know that $\|\xx\|_t$ is the $L^t$ norm of $\xx$.  If $t<1$, then \eqref{tNorm} does not define a norm on $\bigreal$, 
but we continue to use the same notation for the sake of consistency.  Let $\tau:\XX(G) \to G$ be defined by 
\begin{equation*}
	\tau(\alpha_1,\alpha_2,\cdots) = \prod_{n=1}^\infty \alpha_n
\end{equation*} 
and note that $\tau$ is a group homomorphism.  The {\it $t$-metric version of $\phi$} is given by
\begin{equation*}
	\phi_t(\alpha) = \inf\left\{\| (\phi(\alpha_1),\phi(\alpha_2),\ldots) \|_t: (\alpha_1,\alpha_2,\ldots)\in\tau^{-1}(\alpha)  \right\}
\end{equation*}
so that the infimum is taken over all ways of writing $\alpha$ as a product of elements in $G$.  It is immediately clear that if $\psi$ is another 
height on $G$ with $\phi \geq \psi$, then $\phi_t \geq \psi_t$ for all $t$.  The results of \cite{Samuels3} establish the following additional observations.
\begin{enumerate}[(i)]
\item\label{MetricHeightConversion} $\phi_t$ is a $t$-metric height on $G$ with $\phi_t\leq\phi$.
\item\label{BestMetricHeight} If $\psi$ is an $t$-metric height with $\psi\leq\phi$ then $\psi\leq \phi_t$.
\item\label{NoChangeMetric} $\phi = \phi_t$ if and only if $\phi$ is an $t$-metric height.
\item\label{Comparisons} If $s\in (0,t]$ then $\phi_s \geq \phi_t$.
\end{enumerate}

It is well-known that the Mahler measure $M$ is a height on $\algt$ with $Z(M)$ equal to the set of roots of unity.   It follows from the results of \cite{DubSmyth2} and
\cite{FiliSamuels} that $Z(M_t) = Z(M)$ for all $t\in (0,\infty]$.  Among other things, it is noted that $M_1$ and $M_\infty$ induce
the discrete topology on $$V = \algt/Z(M)$$ if and only if Lehmer's conjecture is true.  It turns out that we have something stronger.

\begin{thm} \label{Topologies}
	Lehmer's conjecture is true if and only if there exists $t \in [1,\infty)$ such that $M_t$ and $M_\infty$ induce the same topology on $V$.
\end{thm}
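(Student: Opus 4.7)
The forward implication is immediate. Granting Lehmer's conjecture, the results of Fili and the author cited just above the theorem establish that $M_\infty$ induces the discrete topology on $V$. By property (iv), $M_t \geq M_\infty$, so every $M_t$-open ball is contained in the corresponding $M_\infty$-open ball and the $M_t$-topology refines $M_\infty$; any refinement of the discrete topology is discrete, so the two coincide, and any $t \in [1,\infty)$ (say $t=1$) witnesses the existence statement.

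For the converse I would argue the contrapositive: supposing Lehmer's conjecture fails, I show that for every $t \in [1,\infty)$ the topologies induced by $M_t$ and $M_\infty$ on $V$ are distinct. Fix $t$ and choose non-torsion $\alpha_m \in \algt$ with $M(\alpha_m) \to 0$. For positive integers $k_m$ to be chosen, set $\gamma_m = \alpha_m^{k_m}$. Iterating the $\infty$-triangle inequality yields $M_\infty(\gamma_m) \leq M_\infty(\alpha_m) \leq M(\alpha_m) \to 0$, so $\gamma_m$ converges to $e$ in the $M_\infty$-topology. If the two topologies agreed, we would also have $M_t(\gamma_m) \to 0$; the rest of the argument consists in choosing $k_m$ so that instead $M_t(\gamma_m)$ stays bounded away from $0$.

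The case $t=1$ is clean. The Weil height $h$ is a $1$-metric height with $h \leq M$, so by the second property (ii) above (maximality of $\phi_t$ among $t$-metric heights bounded by $\phi$) we get $M_1 \geq h$. Since $h(\gamma_m) = k_m h(\alpha_m)$ and $h(\alpha_m) > 0$, choosing $k_m \geq 1/h(\alpha_m)$ produces $M_1(\gamma_m) \geq 1$, the desired contradiction. For $t > 1$ this route is closed because $h$ is not a $t$-metric, and indeed $M_t$ can be strictly smaller than $h$: splitting a single factor into many small ones can shrink the $\ell^t$-norm of a decomposition at essentially no cost to subadditivity of $h$. Here I would combine Jensen's inequality with the estimate $\sum_i M(\alpha_i) \geq h(\gamma_m) = k_m h(\alpha_m)$ to obtain
\[
        M_t(\gamma_m)^t = \inf_{\prod \alpha_i = \gamma_m}\sum_i M(\alpha_i)^t \; \geq \; \inf_N \frac{\bigl(k_m h(\alpha_m)\bigr)^t}{N^{t-1}},
\]
where $N$ denotes the number of nontrivial factors in a given decomposition, and then cap $N$ from above---for instance by extracting from the techniques of \cite{Samuels3} a structural bound on the length of an (essentially) optimal decomposition of $\gamma_m$, perhaps in terms of $\deg \gamma_m$. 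Establishing such an upper bound on $N$ is the main obstacle: without it the Jensen lower bound degenerates to $0$ and no contradiction emerges. I expect this reduction---that the defining infimum for $M_t$ is realized, up to a bounded factor, by decompositions of controlled length---to constitute the technical heart of the argument.
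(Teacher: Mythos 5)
Your forward direction matches the paper's. Your converse has the right skeleton---assume Lehmer fails, produce a sequence tending to the identity in the $M_\infty$-metric whose $M_t$-values stay away from $0$, using powers of a small-measure element---and your $t=1$ argument via $M_1\geq h$ is complete and correct. But for $t>1$ you have flagged a genuine gap and not closed it, and the route you sketch toward closing it is problematic. Your Jensen step introduces a factor $N^{-(t-1)}$ where $N$ is the number of factors in a near-optimal decomposition of $\gamma_m=\alpha_m^{k_m}$, and you would need $N$ to grow slowly enough relative to $k_m$. The bound you expect to extract from the earlier machinery (cf.\ Theorem~\ref{FiniteMin}) is of the form $N\leq (M(\gamma_m)/C)^t$, and since $M(\gamma_m)$ can be of order $k_m$, this feeds back a power of $k_m$ into the denominator that swamps the numerator when $t\geq 2$. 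A bound in terms of $\deg\gamma_m$ would fare no better, since that too grows with $k_m$. So the Jensen route, as stated, degenerates.

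The paper sidesteps the $N$-bound entirely. By Theorem~\ref{Achieved} the infimum defining $M_t(\alpha^s)$ is attained by a decomposition $\alpha^s=\zeta\alpha_1\cdots\alpha_N$ with every $\alpha_n\in\rad(K_\alpha)\setminus\tor(\algt)$, so $M(\alpha_n)\geq C(\alpha)>0$ uniformly (Northcott). The key algebraic manipulation then replaces your Jensen step: one writes $M(\alpha_n)^t=M(\alpha_n)^{t-1}M(\alpha_n)\geq C(\alpha)^{t-1}h(\alpha_n)$ for $t\geq 1$, and sums to get
\begin{equation*}
M_t(\alpha^s)^t \;\geq\; C(\alpha)^{t-1}\sum_{n=1}^N h(\alpha_n)\;\geq\; C(\alpha)^{t-1}\,h(\alpha^s)\;=\;s\,h(\alpha)\,C(\alpha)^{t-1},
\end{equation*}
which grows linearly in $s$ with no reference to $N$. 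This is exactly the ``technical heart'' you anticipated, but it is supplied by the attainability theorem plus the factorization $M^t=M^{t-1}\cdot M$, not by a length bound plus Jensen. I'd recommend you replace the Jensen reduction with this estimate; the rest of your argument then goes through.
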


Our goal for the remainder of this article is to examine the functions $t\mapsto M_t(\alpha)$ for a fixed algebraic number $\alpha$.  For simplicity, we define
$\mu_\alpha: (0,\infty] \to [0,\infty)$ by
\begin{equation*} \label{MetricFunctionDef}
	\mu_\alpha(t) = M_t(\alpha).
\end{equation*}   
It is clear from our earlier remarks that $\mu_\alpha$ is decreasing, bounded above by $M(\alpha)$, and
$\mu_\alpha(t)$ tends to $M_\infty(\alpha)$ as $t\to \infty$.  The results of \cite{Samuels3} give some additional properties of $\mu_\alpha$, namely
\begin{enumerate}[(i)]
\item $\mu_\alpha$ is continuous on $(0,\infty)$,
\item $\mu_\alpha$ is constant in a neighborhood of $0$, and
\item\label{AttainedList} The infimum in the definition of $\mu_\alpha(t)$ is always attained.
\end{enumerate}
This final observation suggests the following direction of study.  While the set
\begin{equation*}
	A_\alpha(t) = \{\xx\in\bigreal: \mu_\alpha(t) = \|\xx\|_t\}
\end{equation*}
is always non-empty, it is possible that $A_\alpha(t_1) \cap A_\alpha(t_2)$ is empty for different points $t_1$ and $t_2$.
This suggests that there are points $t\in (0,\infty)$ such that the point $\xx$ where the infimum is attained must change.
We call these points {\it $\alpha$-exceptional} and capture this concept rigorously in the following way.  

A set $I\subseteq (0,\infty]$ is called {\it $\alpha$-uniform} if there exists a point $\xx\in\bigreal$ such that
\begin{equation*}
	\mu_\alpha(t) = \|\xx\|_t
\end{equation*}
for all $t\in I$.  A point $s\in (0,\infty]$ is called {\it $\alpha$-standard} if there exists an $\alpha$-uniform open neighborhood of $s$.  
If $s$ is not $\alpha$-standard, then we say that $s$ is {\it $\alpha$-exceptional}.  Our first result shows that the set of $\alpha$-exceptional
points is rather sparse.

\begin{thm} \label{FiniteBelow2}
 	If $\alpha$ is a non-zero algebraic number and $T$ is a positive real number, then there are only finitely many $\alpha$-exceptional points in $(0,T)$.
\end{thm}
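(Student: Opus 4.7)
The plan is to reduce the problem, on each compact subinterval of $(0,\infty)$, to comparing finitely many real-analytic functions, and then to exploit the rigidity of analytic functions.

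Since $\mu_\alpha$ is constant in some neighborhood $(0,\epsilon_0)$ of zero, no point of $(0,\epsilon_0)$ is $\alpha$-exceptional. It therefore suffices to show that the compact interval $[\epsilon_0, T]$ contains only finitely many $\alpha$-exceptional points.

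The central claim I would establish is a local finiteness statement: for each compact interval $[a,b] \subset (0,\infty)$, there exists a finite collection $\{\xx_1,\ldots,\xx_N\} \subset \bigreal$ such that
\[
	\mu_\alpha(t) = \min_{1 \leq i \leq N} \|\xx_i\|_t \quad \text{for all } t \in [a,b].
\]
Granted this, each function $h_i : t \mapsto \|\xx_i\|_t^t = \sum_n |(x_i)_n|^t$ is real-analytic on $(0,\infty)$, and any two distinct such functions either coincide identically or agree on only finitely many points of $[a,b]$. An $\alpha$-exceptional point of $[a,b]$ must be a point where no single $\xx_i$ realizes the minimum throughout a neighborhood, hence it is a common zero of $h_i-h_j$ for some $(i,j)$ with $h_i\not\equiv h_j$. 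This produces only finitely many candidates, so the exceptional set in $[a,b]$ is finite.

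To prove the local finiteness claim, I would proceed in two stages. First, bound the support size of any optimizing $\xx=(M(\alpha_1),M(\alpha_2),\ldots)$ uniformly in $t\in[a,b]$. From $\|\xx\|_t\le M(\alpha)$ one gets $\max_nM(\alpha_n)\le M(\alpha)$, while $\max_nM(\alpha_n)\ge M_\infty(\alpha)>0$ whenever $\alpha$ is not a root of unity (the only case to consider). A compression argument, repeatedly merging factors whose combined contribution strictly decreases $\|\xx\|_t^t$, then aims to yield a uniform bound $N_0=N_0(\alpha,b)$ on the number of nonzero entries. Second, show that among decompositions $\alpha=\alpha_1\cdots\alpha_k$ with $k\le N_0$, only finitely many vectors $(M(\alpha_1),\ldots,M(\alpha_k))$ can be optimal somewhere in $[a,b]$.

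The hardest step will be this last finiteness. Without Lehmer's conjecture, the set $\{M(\beta):\beta\in\algt,\ M(\beta)\le M(\alpha)\}$ is not known to be discrete, so one cannot enumerate candidate entries directly. Overcoming this requires exploiting the multiplicative constraint in $V=\algt/Z(M)$ together with the optimality conditions (no gain from further merging or splitting) to exclude, on a fixed compact interval, all but finitely many factorizations. I expect this step to draw heavily on the attainment result from \cite{Samuels3} and on the structure of coset representatives in $V$.
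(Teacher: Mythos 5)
Your overall strategy matches the paper's: reduce to comparing finitely many real-analytic functions of $t$ on a compact interval and then invoke rigidity of analytic functions to get finitely many crossing points. Where you stall, however, is precisely at the step you call hardest, and the gap is genuine.

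You correctly worry that $\{M(\beta):\beta\in\algt,\ M(\beta)\le M(\alpha)\}$ is not known to be discrete, so enumerating candidate entries over all of $\algt$ is hopeless. But the paper's resolution is not, as you guess, a clever use of coset representatives in $V$ or additional optimality conditions. The resolution is a structural refinement of the attainment theorem: Theorem~\ref{Achieved} says the infimum in $M_t(\alpha)$ is attained by a factorization $\alpha = \alpha_1\cdots\alpha_N$ with \emph{all} $\alpha_n\in\rad(K_\alpha)$, where $K_\alpha$ is the Galois closure of $\rat(\alpha)$. This restriction is the whole point, because Lemma~\ref{HeightInK} shows that the set $\{M(\gamma):\gamma\in\rad(K_\alpha),\ M(\gamma)\le M(\alpha)\}$ is \emph{finite}. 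So each entry $M(\alpha_n)$ is drawn from a fixed finite set $R(\alpha)$, and moreover the positive elements of that set are bounded below by $C(\alpha)>0$ (via Northcott applied to $K_\alpha$, extended to $\rad(K_\alpha)$). This last bound is also what makes your support-size estimate work: your observation that $\max_n M(\alpha_n)\ge M_\infty(\alpha)$ bounds the maximum, not the minimum, and your ``compression'' heuristic does not obviously reduce the count; what one actually uses is $M(\alpha_n)\ge C(\alpha)$ for each non-torsion factor together with $\sum M(\alpha_n)^t \le M(\alpha)^t$ to get $N\le (M(\alpha)/C(\alpha))^T + 1$. Once you have both the finite alphabet $R(\alpha)$ and the bound $J(\alpha,T)$ on the number of nonzero entries, your finite collection $\{\xx_1,\ldots,\xx_N\}$ exists (this is Theorem~\ref{FiniteMin}), and the rest of your argument via analyticity is exactly right.
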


It is an open question to determine whether there are only finitely many $\alpha$-exceptional points in all of $(0,\infty)$.  The proof of Theorem \ref{FiniteBelow2} relies on 
an upper bound, depending on both $\alpha$ and $T$, on the number of terms that may appear in any factorization of $\alpha$.  It appears that we cannot remove the
dependency on $T$ to establish the finiteness of the set of $\alpha$-exceptional points.  Nonetheless, we know of no example of an algebraic number $\alpha$ having
infinitely many $\alpha$-exceptional points.

Conceptually, the $\alpha$-exceptional points represent values of $t$ at which the infimum attaining point $\xx$ must change.
Our next Theorem shows that the intervals between the $\alpha$-exceptional points contain no such changes.

\begin{thm} \label{UniformIntervals}
	Suppose that $0 < a < b <\infty$.  Then $[a,b]$ is $\alpha$-uniform if and only if every point in $(a,b)$ is $\alpha$-standard.
        Moreover, $(0,a]$ is $\alpha$-uniform if and only if every point in $(0,a)$ is $\alpha$-standard.
\end{thm}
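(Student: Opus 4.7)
The reverse implication is immediate in each case: if some $\xx\in\bigreal$ satisfies $\mu_\alpha(t)=\|\xx\|_t$ on $[a,b]$, then $(a,b)$ itself is an $\alpha$-uniform open neighborhood of every point in $(a,b)$, and similarly for $(0,a]$. The substance is in the forward direction. I assume every $s\in(a,b)$ is $\alpha$-standard, so that each admits an $\alpha$-uniform open neighborhood witnessed by some $\xx_s\in\bigreal$, and I aim to produce a single $\xx$ that works globally on $[a,b]$. Since $\|\xx\|_t$ depends only on the multiset of absolute values of the entries of $\xx$, I may replace every such $\xx$ by $(|x_n|)_n$ and assume entries are non-negative.

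The main technical obstacle is the following uniqueness lemma, which I would prove first: if $\xx,\yy\in\bigreal$ satisfy $\|\xx\|_t=\|\yy\|_t$ for all $t$ in some nonempty open subinterval of $(0,\infty)$, then $\|\xx\|_t=\|\yy\|_t$ on all of $(0,\infty]$, and the multisets of nonzero entries of $\xx$ and $\yy$ coincide. The argument is that $\|\xx\|_t^t=\sum_i c_i a_i^t$ and $\|\yy\|_t^t=\sum_j d_j b_j^t$ are finite sums of exponentials in $t$ with distinct bases $a_i,b_j>0$ and positive integer multiplicities $c_i,d_j$; both sides extend to entire functions of $t$, so equality on an open interval propagates to all real $t$ by analytic continuation. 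Dividing both sides by the largest base on each side and sending $t\to\infty$ isolates the largest base and its multiplicity, forcing them to agree; subtracting and iterating peels off the remaining terms.

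Granting the lemma, for each $\xx\in\bigreal$ define
\begin{equation*}
E_\xx = \{t\in(a,b):\mu_\alpha(t)=\|\xx\|_t\},
\end{equation*}
which is closed in $(a,b)$ since $\mu_\alpha$ is continuous on $(0,\infty)$ and $t\mapsto\|\xx\|_t$ is continuous on $(0,\infty]$ for finitely supported $\xx$. The hypothesis that every point of $(a,b)$ is $\alpha$-standard says $\bigcup_\xx \mathrm{int}(E_\xx)$ covers $(a,b)$. If two such interiors $\mathrm{int}(E_\xx)$ and $\mathrm{int}(E_\yy)$ share a point $t_0$, then $\|\xx\|_t=\mu_\alpha(t)=\|\yy\|_t$ on an open neighborhood of $t_0$, so by the lemma $\|\xx\|_t=\|\yy\|_t$ identically and $E_\xx=E_\yy$. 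Hence the distinct sets in $\{\mathrm{int}(E_\xx)\}$ form a pairwise disjoint open cover of the connected interval $(a,b)$, and only one can be nonempty: a single $\xx$ satisfies $\mu_\alpha(t)=\|\xx\|_t$ throughout $(a,b)$. Continuity of both sides on $(0,\infty)$ extends the identity to the endpoints $t=a$ and $t=b$, proving $[a,b]$ is $\alpha$-uniform. The moreover statement follows from the identical argument applied to $(0,a)$, extended to $a$ by continuity; no extension at $0$ is required since $(0,a]$ excludes it.
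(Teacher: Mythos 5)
Your proof is correct, but it takes a genuinely different route from the paper's. The paper proves the forward implication by a compactness-and-gluing argument: it first establishes a one-sided neighborhood lemma (at every $t$ there is a neighborhood $(a',b')$ with both $(a',t]$ and $[t,b')$ $\alpha$-uniform), covers $[a,b]$ by $\alpha$-uniform relatively open intervals, extracts a finite subcover by compactness, and then glues consecutive pieces by the same analytic-continuation observation you invoke; the $(0,a]$ case is handled separately, using the fact from \cite{Samuels3} that $\mu_\alpha$ is constant near $0$ to supply a uniform piece near the non-compact end. Your approach replaces compactness with connectedness: you show the (interiors of the) sets $E_\xx$ on which a fixed $\xx$ is optimal form a pairwise disjoint open cover of the open interval, conclude that a single $\xx$ works on the whole interior, and then push to the endpoints by continuity. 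Your uniqueness lemma is the same analytic-continuation fact that powers the paper's gluing lemma, packaged slightly differently (you even prove more than you need -- identification of the multisets -- though only the equality of the norm functions is used). The main gains of your version are that it avoids Lemma~\ref{Neighborhood} entirely and treats $(0,a]$ by the same argument as $[a,b]$ without appealing to the constancy of $\mu_\alpha$ near $0$; the paper's version, by contrast, isolates reusable lemmas (\ref{Neighborhood} and \ref{CoverLemma}) that it calls again in the proofs of Corollary~\ref{NonOver} and Theorem~\ref{StandardDiff}. Both arguments are sound.
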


We now apply Theorems \ref{FiniteBelow2} and \ref{UniformIntervals} to show that $\mu_\alpha$ may be constructed piecewise
from functions of the form $t\mapsto \|\xx\|_t$.  The pieces are divided precisely by the $\alpha$-exceptional points.

\begin{cor} \label{NonOver}
Let $\alpha$ be a non-zero algebraic number and $T$ a positive real
number.  There exists a finite collection of non-overlapping intervals $\mathcal I$, each closed in $(0,T]$, such that
\begin{enumerate}[(i)]
	\item\label{Uniforms} Each interval in $\mathcal I$ is $\alpha$-uniform,
	\item\label{Union} $\displaystyle (0,T] = \cup_{I\in \mathcal I} I$, and
	\item\label{Exceptionals} If $t\in (0,T)$ then $t$ is $\alpha$-exceptional if and only if there exist distinct intervals $I_1,I_2\in\mathcal I$ such that $t\in I_1\cap I_2$.
\end{enumerate}
\end{cor}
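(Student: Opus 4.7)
The plan is to apply Theorems \ref{FiniteBelow2} and \ref{UniformIntervals} in sequence: first use the former to partition $(0,T]$ at finitely many breakpoints, then use the latter to certify $\alpha$-uniformity on each resulting piece.

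First, I would invoke Theorem \ref{FiniteBelow2} to list the $\alpha$-exceptional points in $(0,T)$ in increasing order as $t_1 < t_2 < \cdots < t_k$, allowing $k = 0$. If $k \geq 1$, set
\begin{equation*}
I_0 = (0, t_1], \quad I_j = [t_j, t_{j+1}] \text{ for } 1 \leq j \leq k-1, \quad I_k = [t_k, T],
\end{equation*}
and let $\mathcal I = \{I_0, I_1, \ldots, I_k\}$; if $k = 0$, simply let $\mathcal I = \{(0,T]\}$. Each member of $\mathcal I$ is closed in the relative topology on $(0,T]$ (the half-open $I_0$ because its complement $(t_1, T]$ is open in $(0,T]$), the collection is finite and non-overlapping since consecutive intervals share only one endpoint, and property (ii) is immediate.

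For property (i), the key observation is that by construction the interior of each $I_j$ contains no $\alpha$-exceptional point, so every interior point is $\alpha$-standard. The first clause of Theorem \ref{UniformIntervals} then yields that $I_j$ is $\alpha$-uniform for each $j \geq 1$, while the second clause yields the same for $I_0$, and likewise for the single interval $(0,T]$ in the case $k = 0$. Property (iii) is then a tautology of the construction: two distinct intervals in $\mathcal I$ meet only at some breakpoint $t_j$, and the breakpoints are by definition exactly the $\alpha$-exceptional points of $(0,T)$.

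I expect no substantive obstacle; this result is essentially a packaging of the two preceding theorems. The only points requiring any care are to invoke the second clause of Theorem \ref{UniformIntervals} (rather than the first) for the leftmost interval, to verify that the half-open $I_0$ is genuinely closed in the subspace topology on $(0,T]$, and to handle the degenerate case $k=0$ separately.
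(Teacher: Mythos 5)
Your proposal matches the paper's proof essentially verbatim: partition $(0,T]$ at the finitely many $\alpha$-exceptional points provided by Theorem \ref{FiniteBelow2}, then invoke Theorem \ref{UniformIntervals} (second clause for the leftmost half-open interval, first clause for the rest) to certify each piece is $\alpha$-uniform, with (iii) following because the breakpoints are exactly the $\alpha$-exceptional points. You are slightly more careful than the paper in explicitly treating the degenerate case $k=0$ and in verifying that $I_0$ is closed in the subspace topology, but the argument is the same.
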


We now wish to establish a connection between the $\alpha$-standard points and the differentiability of $\mu_\alpha$.  Although it is clear that
$\mu_\alpha$ is infinitely differentiable at all $\alpha$-standard points, it is not obvious what happens at $\alpha$-exceptional points.  Our next theorem
gives some additional insight.

\begin{thm} \label{StandardDiff}
  Let $\alpha$ be an algebraic number and $s\in (0,\infty)$.  Then $s$ is $\alpha$-standard if and only if $\mu_\alpha$ is infinitely differentiable at $s$.
\end{thm}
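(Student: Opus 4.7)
My plan is to leverage the real-analytic structure of the maps $t \mapsto \|\xx\|_t$ for $\xx \in \bigreal$ on both sides of the ``if and only if''.

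For the forward direction, suppose $s$ is $\alpha$-standard; pick an $\alpha$-uniform open neighborhood $U$ of $s$ together with a fixed $\xx \in \bigreal$ for which $\mu_\alpha(t) = \|\xx\|_t$ on $U$. Because only finitely many coordinates of $\xx$ are nonzero, I would note that
\begin{equation*}
\|\xx\|_t^{\,t} = \sum_{n:\, x_n \neq 0} e^{t\log|x_n|}
\end{equation*}
is a finite sum of exponentials in $t$, hence an entire function of $t$. Extracting $t$-th roots then shows that $t \mapsto \|\xx\|_t$ is real-analytic on $(0,\infty)$, so $\mu_\alpha$ is real-analytic (in particular, infinitely differentiable) at $s$.

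For the converse I would argue by contrapositive. Assume $s$ is $\alpha$-exceptional, fix any $T > s$, and invoke Corollary~\ref{NonOver} to produce a finite, non-overlapping family $\mathcal I$ of closed-in-$(0,T]$ $\alpha$-uniform intervals covering $(0,T]$. Since $s \in (0,T)$ is exceptional, part (iii) of that corollary supplies distinct $I_1, I_2 \in \mathcal I$ with $s \in I_1 \cap I_2$; the non-overlapping plus closedness conditions force, after relabelling, that $s$ is the right endpoint of $I_1$ and the left endpoint of $I_2$. I then select witnesses $\xx, \yy \in \bigreal$ satisfying $\mu_\alpha(t) = \|\xx\|_t$ on $I_1$ and $\mu_\alpha(t) = \|\yy\|_t$ on $I_2$, and set $g(t) = \|\xx\|_t$, $h(t) = \|\yy\|_t$; both are real-analytic on $(0,\infty)$ by the previous paragraph.

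To finish, a routine induction on $k$ shows that if $\mu_\alpha$ is infinitely differentiable at $s$, then $g^{(k)}(s) = h^{(k)}(s)$ for every $k \geq 0$, since the $k$-th left and right derivatives of the piecewise function $\mu_\alpha$ at $s$ are precisely $g^{(k)}(s)$ and $h^{(k)}(s)$. Real-analyticity of $g$ and $h$ at $s$ then upgrades equality of all Taylor coefficients at $s$ to $g \equiv h$ on some open neighborhood $V$ of $s$, at which point $I_1 \cup V$ becomes an $\alpha$-uniform open neighborhood of $s$---contradicting the exceptionality of $s$. The main obstacle, in my view, is cleanly establishing the real-analyticity of $t \mapsto \|\xx\|_t$ at $s > 0$, since this is what powers the analytic-continuation step; the inductive matching of one-sided derivatives is then automatic.
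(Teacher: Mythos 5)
Your proof is correct and follows essentially the same strategy as the paper: the forward direction is immediate from local agreement with a smooth $t\mapsto\|\xx\|_t$, and the converse matches all one-sided derivatives of $\mu_\alpha$ at $s$ and invokes analytic continuation to force the two flanking representations to coincide. The paper's one technical simplification is to compare $\ell(t)=\mu_\alpha(t)^t$ against $z\mapsto\|\xx\|_z^z$, a finite sum of exponentials and hence manifestly entire; this sidesteps the real-analyticity of $t\mapsto\|\xx\|_t$ that you correctly flag as the delicate step in your version. It also gets the one-sided $\alpha$-uniform intervals around $s$ directly from Lemma~\ref{Neighborhood} rather than routing through Corollary~\ref{NonOver}, but the effect is identical.
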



\section{A conjecture on the infimum in $M_t(\alpha)$ and some applications} \label{ConjApps} 

For this section, we restrict our attention to the case that $\alpha$ is rational.  In this simpler setting, we may be able to give a more thorough description
of $\mu_\alpha$.

Recall that Theorem \ref{Achieved} shows the infimum in the definition of $M_t(\alpha)$ to be attained.  Moreover, in the case 
that $\alpha$ is rational, this infimum must be attained by a point $(\alpha_1,\ldots,\alpha_N)$ where each $\alpha_n$ is a
surd.  However, we are unable to construct an example where the infimum is not attained by a point having only rational coordinates.
This leads to the following conjecture.

\begin{conj} \label{InfimumRational}
Suppose $\alpha$ is a rational number and $t\in (0,\infty]$.  Then there exist rational points $\alpha_1,\ldots,\alpha_N$ such that
\begin{equation*}
  M_t(\alpha)^t = \sum_{n=1}^N M(\alpha_n)^t.
\end{equation*}
\end{conj}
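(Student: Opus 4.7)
The plan is to reduce to the regime $t>1$ and then combine Galois-theoretic averaging with an exchange argument in prime-exponent coordinates. For the reduction, note that for rational $\alpha$ the trivial factorization yields $\mu_\alpha(t)\le M(\alpha)$, while in the other direction the Weil-height bound $M(\alpha_i)\ge h(\alpha_i)$ (valid for every $\alpha_i\in\algt$) combined with $\sum_i h(\alpha_i)\ge h(\prod_i\alpha_i)=h(\alpha)=M(\alpha)$ gives $\mu_\alpha(1)=M(\alpha)$. The monotonicity $\mu_\alpha(s)\ge\mu_\alpha(t)$ for $s\le t$ then forces $\mu_\alpha(t)=M(\alpha)$ throughout $t\in(0,1]$, which is attained by the rational singleton $(\alpha)$. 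Thus it suffices to treat $t>1$.

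For $t>1$, start from an optimal factorization $(\alpha_1,\ldots,\alpha_N)$ with each $\alpha_i=\beta_i^{1/d_i}$ a surd, as asserted in the paragraph preceding the conjecture, where $\beta_i=p_i/q_i\in\rat^{\times}$. A direct computation with the integer minimal polynomial $q_ix^{d_i}-p_i$ shows $M(\alpha_i)=M(\beta_i)$: each surd factor contributes exactly the cost of its rational ancestor but only the $1/d_i$ fraction of the ancestor's logarithm toward $\alpha$. Setting $L=\rat(\alpha_1,\ldots,\alpha_N)$ and $D=[L:\rat]$, every $\sigma\in\gal(L/\rat)$ fixes $\alpha$ and produces an optimal conjugate tuple $(\sigma\alpha_1,\ldots,\sigma\alpha_N)$. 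Concatenating all $D$ such tuples yields a surd factorization of $\alpha^D$ of $t$-norm$^t$ equal to $D\,\mu_\alpha(t)^t$; grouping its entries into complete Galois orbits---the orbit of $\alpha_i$ has product $\pm\beta_i$ and appears $D/d_i$ times in the concatenation---produces a \emph{rational} factorization of $\alpha^D$ with $t$-norm$^t$ equal to $\sum_i(D/d_i)M(\beta_i)^t$.

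The crux is then to descend from this rational factorization of $\alpha^D$ to a rational factorization of $\alpha$ itself without exceeding $\mu_\alpha(t)^t=\sum_i M(\beta_i)^t$. Direct ``$D$-th root'' extraction fails because the $\beta_i$ are not individually $D$-th powers in $\rat$, and the inequality $\mu_{\alpha^D}(t)^t\le D\,\mu_\alpha(t)^t$ runs in the wrong direction. The approach I would attempt is to translate the problem into prime-exponent coordinates: writing each candidate rational factor of $\alpha$ as $\pm\prod_k p_k^{n_{jk}}$, the constraint becomes $\sum_j n_{jk}=v_{p_k}(\alpha)$ for every rational prime $p_k$, while the cost becomes $\sum_j\bigl(\log\max\bigl(\prod_k p_k^{n_{jk}^+},\prod_k p_k^{n_{jk}^-}\bigr)\bigr)^t$. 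An exchange argument, exploiting the strict convexity of $x\mapsto x^t$ for $t>1$ together with the identity $M(\alpha_i)=M(\beta_i)$, should then show that any surd solution can be replaced by an integer (i.e.\ rational) solution of no greater cost.

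I expect the exchange argument to be the principal obstacle. Even after restricting to a single Galois orbit one must perform simultaneous swaps at every prime coordinate without cross-interference, and the ``max'' inside the logarithm makes the cost both highly nonlinear and nonsmooth. This delicate interaction between the arithmetic of prime factorizations and the piecewise structure of $M$ on rationals is, I suspect, the reason the author poses the statement only as a conjecture; a complete proof may require either a novel symmetrization argument or a careful case analysis driven by the prime-factorization pattern of $\alpha$.
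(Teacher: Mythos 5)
This statement is labeled a \emph{conjecture} in the paper and is not proved there; the author only observes (via Theorem~\ref{Achieved}) that the infimum in $M_t(\alpha)$ is attained by a tuple of surds, and remarks that no example is known where attainment by a rational tuple fails. So there is no proof of the paper's to compare against, and the only question is whether your sketch closes the conjecture. It does not, as you yourself acknowledge.

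The sound parts of your sketch: the reduction to $t>1$ (via $M\ge h$, the triangle inequality for $h$, and $h=M$ on $\rat^{\times}$) is correct and recovers the known $M_t|_{\rat^{\times}}=M$ for $t\le 1$; and the Galois averaging is valid in outline --- conjugate tuples of an optimal tuple remain optimal since $M$ is Galois-invariant, and regrouping the concatenation of all $D=[L:\rat]$ conjugate tuples into complete orbits turns each surd $\alpha_i=\beta_i^{1/d_i}$ into $D/d_i$ rational factors $\pm\beta_i$, producing a rational factorization of $\alpha^D$ of $t$-norm$^t$ at most $\sum_i (D/d_i)M(\beta_i)^t\le D\,\mu_\alpha(t)^t$ (granting the technicality that $q_i x^{d_i}-p_i$ is actually the minimal polynomial of $\alpha_i$, which needs $p_i/q_i$ reduced and the surd genuinely of degree $d_i$; otherwise $M(\alpha_i)=M(\beta_i)$ can fail and a normalization step is required). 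The genuine gap is exactly the descent from $\alpha^D$ to $\alpha$: as you note, $\mu_{\alpha^D}(t)^t\le D\,\mu_\alpha(t)^t$ runs the wrong way, and the exchange argument in prime-exponent coordinates is proposed but not carried out. To close it you would need a concrete local move that strictly decreases the $\|\cdot\|_t^t$-cost, preserves the product constraint, and terminates at a rational tuple \emph{for $\alpha$} rather than for $\alpha^D$ --- but strict convexity of $x\mapsto x^t$ alone does not obviously supply such a move, because the per-factor cost is $\bigl(\log\max(\mathrm{num},\mathrm{den})\bigr)^t$, which is nonsmooth and nonseparable across primes. This descent is precisely the difficulty, and it is why the paper leaves the statement open.
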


In view of the results of \cite{DubSmyth2} and \cite{FiliSamuels}, Conjecture \ref{InfimumRational} is true for the cases
$t\leq 1$ and $t=\infty$.  In fact, in each case, a specific representation can be given that attains the infimum in
$M_t(\alpha)$.  Unfortunately, the proofs seem to be genuinely different and cannot be modified to include the intermediate values of $t$.

If Conjecture \ref{InfimumRational} is true, then we may often explicitly graph $\mu_\alpha(t)$.  Our procedure
relies on the following observation.

\begin{thm} \label{RationalFactorization}
  Suppose that $r$ and $s$ are relatively prime positive integers.  If Conjecture \ref{InfimumRational} holds, then there exist positive
  integers $r_1,\ldots,r_N, s_1,\ldots s_N$ such that
  \begin{equation*}
    M_t \left(\frac{r}{s}\right)^t = \sum_{n=1}^N M\left(\frac{r_n}{s_n}\right)^t
  \end{equation*}
 and
  \begin{equation*}
    r = \prod_{n=1}^N r_n\quad\mathrm{and}\quad s = \prod_{n=1}^N s_n.
  \end{equation*}
\end{thm}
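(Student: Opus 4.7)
The plan is to start from the optimal rational factorization given by Conjecture \ref{InfimumRational} and then perform a finite sequence of local modifications until the numerators multiply exactly to $r$ and the denominators multiply exactly to $s$.

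First, apply Conjecture \ref{InfimumRational} to obtain nonzero rationals $\alpha_1, \ldots, \alpha_N$ with $\prod_{n=1}^{N} \alpha_n = r/s$ and $\sum_{n=1}^{N} M(\alpha_n)^t = M_t(r/s)^t$. Since $M(-x) = M(x)$ for any rational $x$ (the minimal polynomial of $-\alpha$ has the same Mahler measure as that of $\alpha$), replacing each $\alpha_n$ by $|\alpha_n|$ preserves the sum of Mahler measures; and since $r/s > 0$, it also preserves the product. So I may assume each $\alpha_n > 0$. Write $\alpha_n = p_n/q_n$ in lowest terms with $p_n, q_n$ positive integers. Then there is a positive integer $D$ with $\prod p_n = Dr$ and $\prod q_n = Ds$, and the theorem reduces to showing that we may arrange for $D = 1$.

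Suppose $D > 1$ and choose a prime $\pi \mid D$. Then $\pi$ divides both $\prod p_n$ and $\prod q_n$, so there are indices $n_1, n_2$ with $\pi \mid q_{n_1}$ and $\pi \mid p_{n_2}$; the coprimality of each pair $(p_n, q_n)$ forces $n_1 \neq n_2$. Replace $(\alpha_{n_1}, \alpha_{n_2})$ by $(\pi \alpha_{n_1}, \alpha_{n_2}/\pi)$; in lowest terms these new factors are $p_{n_1}/(q_{n_1}/\pi)$ and $(p_{n_2}/\pi)/q_{n_2}$, and both are again positive rationals in lowest terms. The product $\alpha_{n_1}\alpha_{n_2}$ is unchanged, so the overall product remains $r/s$. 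Using the formula $M(p/q) = \log\max\{p,q\}$ for coprime positive $p, q$, each of the two new Mahler measures is at most the corresponding old one, so $\sum M(\alpha_n)^t$ cannot increase. Optimality then forces equality, while the substitution strictly reduces $D$ by a factor of $\pi$. After finitely many such steps $D = 1$, giving $\prod p_n = r$ and $\prod q_n = s$ with the Mahler measure identity preserved; setting $r_n = p_n$ and $s_n = q_n$ then yields the desired factorization.

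The main step to get right is the pointwise comparison of Mahler measures before and after one local modification, which relies crucially on the explicit formula for $M$ on rationals in lowest terms; once that is in hand, the remainder is routine bookkeeping on prime valuations, together with the observation that the modification process must terminate because $D$ decreases strictly at each step. A minor preliminary issue is handling the signs produced by Conjecture \ref{InfimumRational}, which is resolved by replacing each $\alpha_n$ with $|\alpha_n|$ at the outset.
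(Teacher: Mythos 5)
Your proof is correct and follows essentially the same strategy as the paper: starting from the rational factorization supplied by Conjecture \ref{InfimumRational}, repeatedly cancel a common prime between a numerator in one factor and a denominator in another, observe that $M(p/q) = \log\max\{p,q\}$ guarantees the sum $\sum M(\alpha_n)^t$ cannot increase (and hence, by optimality, stays fixed), and iterate until the numerators and denominators multiply exactly to $r$ and $s$. The only differences are cosmetic: you explicitly dispose of the sign issue by replacing each $\alpha_n$ with $|\alpha_n|$, you normalize to lowest terms at the outset, and you track termination via the integer $D$ with $\prod p_n = Dr$, whereas the paper works with not-necessarily-reduced integer pairs and reduces to the coprimality $\gcd(\prod a_n, \prod b_n) = 1$ before extracting $r$ and $s$ by divisibility.
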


The first statement of Theorem \ref{RationalFactorization} is simply a rephrasing of Conjecture \ref{InfimumRational}.  The real
content of the result occurs in the second statement, which shows that we need only consider all possible factorizations of the numerator and
denominator.  This allows us to determine $M_t(\alpha)$ with a finite search.  The case where $\alpha\in\intg$ is particularly straightforward.

\begin{thm} \label{Integers}
  Suppose that $\alpha$ is a positive integer and write
  \begin{equation*}
    \alpha = \prod_{n=1}^N p_n
  \end{equation*}
  where $p_n$ are not necessarily distinct primes.  If Conjecture \ref{InfimumRational} holds then
  \begin{equation*}
    M_t(\alpha)^t = \left\{ \begin{array}{ll}
        (\log \alpha)^t & \mathrm{if}\ t\leq 1 \\
        \sum_{n=1}^N (\log p_n)^t & \mathrm{if}\ t \geq 1.
        \end{array}\right.
    \end{equation*}
\end{thm}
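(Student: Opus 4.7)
The plan is to combine Theorem \ref{RationalFactorization} (granted by Conjecture \ref{InfimumRational}) with the elementary super/sub-additivity of $x \mapsto x^t$. Since $\alpha$ is a positive integer, we have $s = 1$ in that theorem, which forces $s_n = 1$ for every $n$. Thus the infimum defining $M_t(\alpha)$ is attained at some positive-integer factorization $\alpha = \prod_n r_n$ with
$$M_t(\alpha)^t = \sum_n M(r_n)^t = \sum_n (\log r_n)^t.$$
Refining each $r_n$ by its prime factorization $r_n = q_{n,1}\cdots q_{n,k_n}$, unique factorization tells us that the multiset $\{q_{n,j}\}$ coincides with $\{p_1,\ldots,p_N\}$.

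For the case $t \geq 1$, the prime factorization of $\alpha$ itself is an admissible input to the defining infimum, which gives the upper bound $M_t(\alpha)^t \leq \sum_{n=1}^N (\log p_n)^t$. The matching lower bound rests on the elementary inequality $\left(\sum_j a_j\right)^t \geq \sum_j a_j^t$ for non-negative reals when $t \geq 1$ (reducing by induction to the two-term case, itself verified from $x^t + (1-x)^t \leq 1$ on $[0,1]$). Applied with $a_j = \log q_{n,j}$, this yields $(\log r_n)^t \geq \sum_j (\log q_{n,j})^t$; summing over $n$ gives $M_t(\alpha)^t = \sum_n (\log r_n)^t \geq \sum_{n=1}^N (\log p_n)^t$, as required.

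For the case $t \leq 1$, the direction of the inequality reverses to $\left(\sum_j a_j\right)^t \leq \sum_j a_j^t$, so any positive-integer factorization satisfies $\sum_n (\log r_n)^t \geq \left(\sum_n \log r_n\right)^t = (\log \alpha)^t$. The trivial factorization $\alpha = \alpha$ attains this bound while also realizing the upper bound $M_t(\alpha)^t \leq M(\alpha)^t = (\log \alpha)^t$, completing that case.

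The main obstacle is conceptual rather than computational: the reduction to \emph{integer} (as opposed to rational, algebraic, or surd) factorizations depends entirely on Conjecture \ref{InfimumRational}, which is currently unconditional only for $t \leq 1$ and $t = \infty$. Once that reduction is in hand, the argument is elementary, turning only on the direction of the power-mean inequality on either side of $t = 1$ and on the identity $M(r) = \log r$ for positive integers $r$.
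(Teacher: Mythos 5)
Your proof is correct and, for $t \geq 1$, essentially parallels the paper's: both pass through Theorem \ref{RationalFactorization} (with $s=1$ forcing each $s_n = 1$) and then exploit the strict superadditivity of $x\mapsto x^t$ to force the optimal integer factorization down to primes. The paper frames this as a contradiction (if some $k_j = ab$ with $a,b>1$ then $(\log a + \log b)^t > (\log a)^t + (\log b)^t$, via the Mean Value Theorem, would produce a strictly better factorization), while you refine every factor into primes in one step and apply the many-term inequality directly; these are logically the same content, packaged differently.

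The genuine divergence is in the $t\leq 1$ case. The paper avoids Conjecture \ref{InfimumRational} there entirely: it cites the unconditional identity $M_1 = M$ on rationals from Dubickas--Smyth together with the monotonicity of $t\mapsto M_t(\alpha)$, giving $M(\alpha) = M_1(\alpha) \leq M_t(\alpha) \leq M(\alpha)$. You instead invoke Theorem \ref{RationalFactorization} again and use subadditivity of $x\mapsto x^t$ for $t\leq 1$ to show the trivial factorization is optimal. Both are valid (and the conjecture is in any case known for $t\leq 1$, so there is no loss of generality), but the paper's route makes it visibly unconditional and is a bit shorter. Your approach has the small virtue of treating the two cases symmetrically via the two directions of the power inequality.
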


Theorem \ref{Integers} shows, in particular, that under Conjecture \ref{InfimumRational}, an integer has no exceptional points except
possibly at $1$.  An integer has an exceptional point at $1$ if and only if that integer is composite.

It is natural to ask whether a result analogous to Theorem \ref{Integers} holds for any rational number $\alpha$.  Although we always have that
$M_t(\alpha) = M(\alpha)$ for $t\leq 1$, the situation seems to be more complicated for larger values of $t$.  We continue to assume
Conjecture \ref{InfimumRational} in the remarks that follow.

Consider, for example, $\alpha = 7/30$.  In the left column of Table \ref{fig:Factorizations}, we give all
possible representations of $7/30$ that satisfy the conclusion of Theorem \ref{RationalFactorization}.  In the right
column, we write their corresponding (non-logarithmic) Mahler measures.

\begin{table}
\caption{Factorizations of $7/30$}\label{fig:Factorizations}
\centering
\begin{tabular}{ c | c }
\hline\hline
Factorization of $7/30$ & Corresponding (non-logarithmic) Mahler measures \\
\hline
$\frac{7}{30}$ & (30) \\
$7\cdot\frac{1}{30}$ & (7,30) \\
$\frac{7}{2}\cdot \frac{1}{15}$ & (7,15) \\
$\frac{1}{2}\cdot \frac{7}{15}$ & (2,15) \\
$\frac{7}{3}\cdot \frac{1}{10}$ & (7,10) \\
$\frac{1}{3}\cdot \frac{7}{10}$ & (3,10) \\
$\frac{7}{6}\cdot \frac{1}{5}$ & (7,5) \\
$\frac{1}{6}\cdot \frac{7}{5}$ & (6,7) \\
$\frac{7}{2}\cdot \frac{1}{3}\cdot \frac{1}{5}$ & (7,3,5) \\
$\frac{1}{2}\cdot \frac{7}{3}\cdot \frac{1}{5}$ & (2,7,5) \\
$\frac{1}{2}\cdot \frac{1}{3}\cdot \frac{7}{5}$ & (2,3,7) \\
$\frac{1}{2}\cdot \frac{1}{15}\cdot 7$ & (2,15,7) \\
$\frac{1}{3}\cdot \frac{1}{10}\cdot 7$ & (3,10,7) \\
$\frac{1}{6}\cdot \frac{1}{5}\cdot 7$ & (6,5,7) \\
$\frac{1}{2}\cdot \frac{1}{3}\cdot \frac{1}{5} \cdot 7$ & (2,3,5,7)
\end{tabular}
\end{table}

We obtain immediately a natural partial ordering on the $N$-tuples $(a_1,\ldots,a_N)$ appearing in the right column of Table \ref{fig:Factorizations}.
We say that $(a_1,\ldots,a_N) \leq (b_1,\ldots,b_M)$ if
\begin{equation*}
 \|(a_1,\ldots,a_N)\|_t \leq \|(b_1,\ldots,b_M)\|_t
\end{equation*}
for all $t > 0$.  For example, we note that $(2,3,7) \leq (2,5,7)$.  On the other hand, the $L^t$ norms of $(30)$ and $(7,15)$ cross when
\begin{equation*}
  (\log 30)^t = (\log 7)^t + (\log 15)^t
\end{equation*}
so that these elements are not comparable.
An $N$-tuple $(a_1,\ldots,a_N)$ is called {\it minimal} if there does not exist another $M$-tuple $(b_1,\ldots,b_M)$ in right column of 
Table \ref{fig:Factorizations} such that $(b_1,\ldots,b_M) \leq (a_1,\ldots,a_N)$.  When computing $M_t(\alpha)$ we need only consider the minimal $N$-tuples.  
In our case, the minimal $N$-tuples are
\begin{equation*}
   (30)\quad (2,15)\quad (3,10)\quad (7,5)\quad\mathrm{and}\quad(2,3,7).
\end{equation*}
Therefore, it makes sense to define the functions
\begin{align} \label{MinFunctions}
  & f_{1}(t) = \log 30 \nonumber \\
  & f_{2}(t) = \left((\log 2)^t + (\log 15)^t\right)^{1/t} \nonumber \\
  & f_3(t) =  \left((\log 3)^t + (\log 10)^t\right)^{1/t} \\
  & f_4(t) =  \left((\log 7)^t + (\log 5)^t\right)^{1/t} \nonumber \\
  & f_5(t) =  \left((\log 2)^t + (\log 3)^t + (\log 7)^t\right)^{1/t}\nonumber
\end{align}
and note that
\begin{equation} \label{MinFunctionsCombined}
 \mu_{7/30}(t) = \min\{f_n(t): 1\leq n\leq 5\}.
\end{equation}
The graphs of the functions \eqref{MinFunctions} are given in Figure \ref{fig:graphs}. Note that we appear to have an exceptional point at
$1$ and another exceptional point $t$ satisfying the equation
\begin{equation*}
  \left((\log 10)^t + (\log 3)^t\right)^{1/t} = \left((\log 7)^t + (\log 3)^t + (\log 2)^t\right)^{1/t}.  
\end{equation*}
The apparent graph of \eqref{MinFunctionsCombined} is given in Figure \ref{fig:graphs2}.

\begin{figure}[htp]
\centering
\includegraphics[totalheight=0.4\textheight,width=\textwidth]{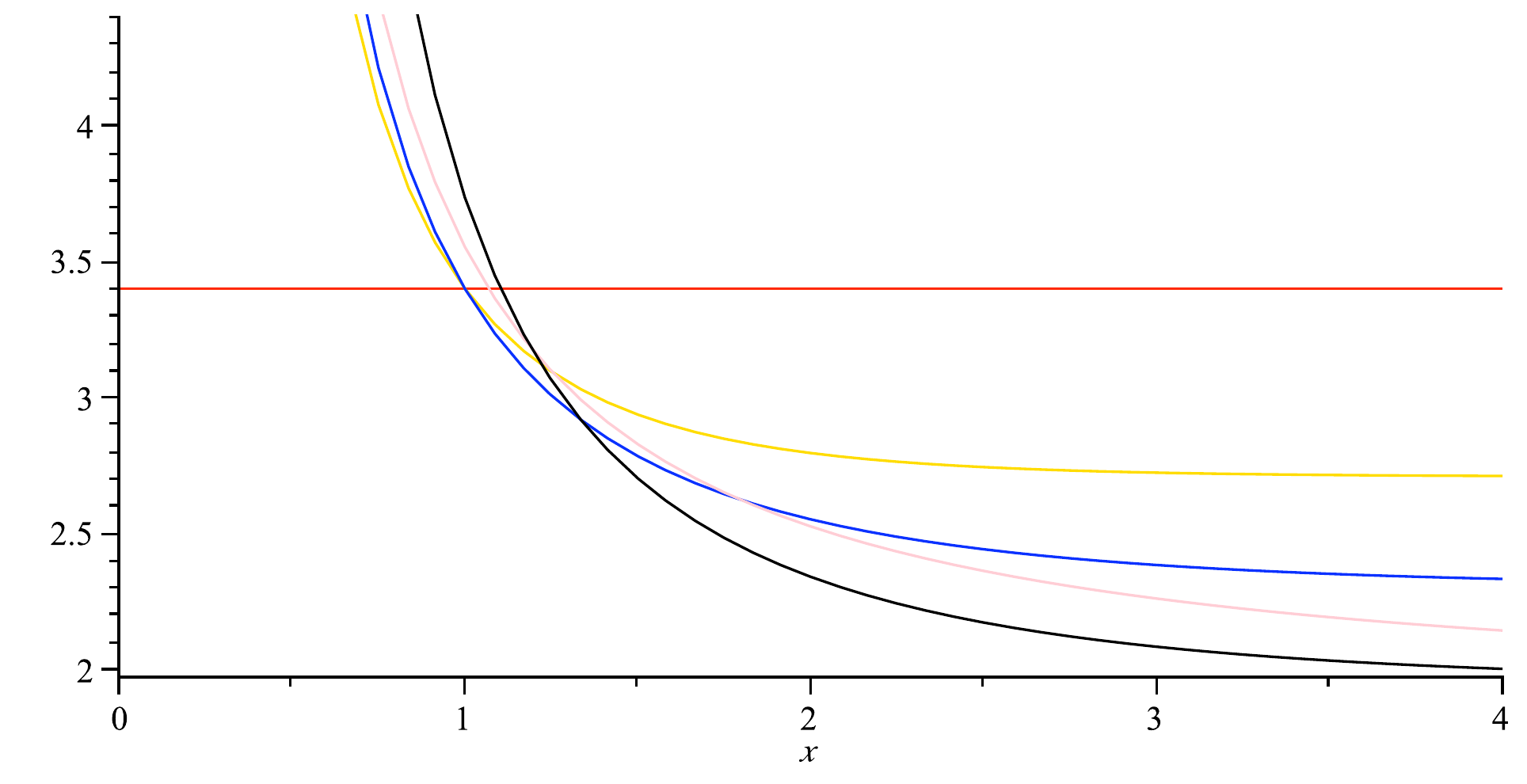}
\caption[]{Graphs corresponding to minimal representations of $7/30$}\label{fig:graphs}
\end{figure}

\begin{figure}[htp]
\centering
\includegraphics[totalheight=0.4\textheight,width=\textwidth]{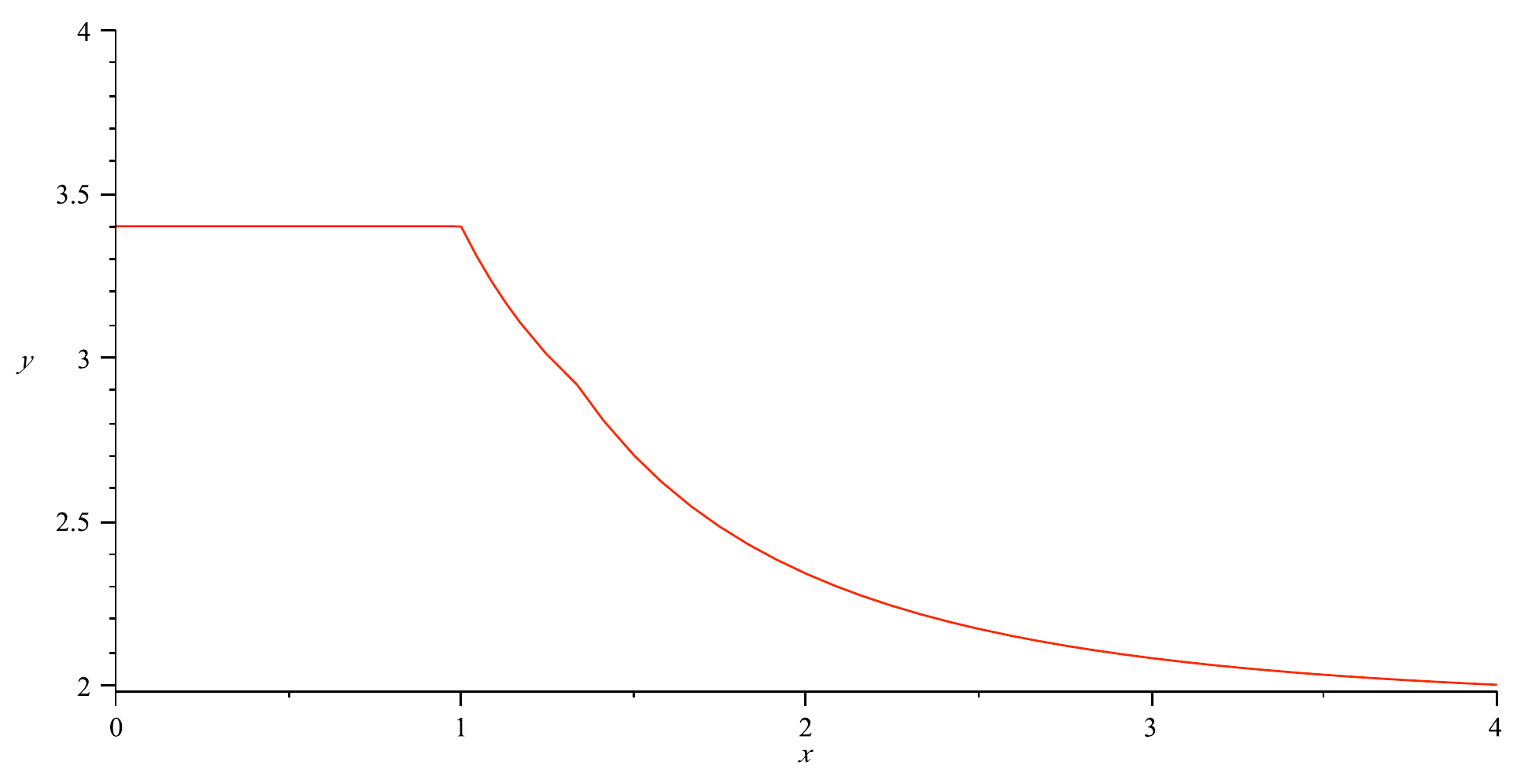}
\caption[]{The graph of $\mu_{7/30}(t)$ assuming Conjecture
  \ref{InfimumRational}}\label{fig:graphs2}
\end{figure}


\section{The topologies induced by the $t$-metric Mahler measures}

In order to proceed with the proof of Theorem \ref{Topologies}, we must recall some definitions and results of \cite{Samuels} and \cite{Samuels3}.
If $S$ is any subset of $\algt$, we write
\begin{equation*}
	\rad(S) = \left\{\alpha\in\algt:\alpha^r\in S\mathrm{\ for\ some}\ r\in\nat\right\}.
\end{equation*}
If $K$ is a number field and $\alpha$ is an algebraic number, let $K_\alpha$ denote the Galois closure of $\rat(\alpha)$ over $\rat$.
We begin with the precise statement of Lemma 3.1 of \cite{Samuels}.

\begin{lem} \label{HeightInK}
	Let $K$ be a Galois extension of $\rat$.  If $\gamma\in\rad(K)$ then there exists a root of unity $\zeta$ and $L,S\in\nat$
	such that $\zeta\gamma^L\in K$ and
	\begin{equation*}
		M(\gamma) = S\cdot M(\zeta\gamma^L).
	\end{equation*}
	In particular, the set
	\begin{equation*}
		\{M(\gamma):\gamma\in\rad(K),\ M(\gamma) \leq B\}
	\end{equation*}
	is finite for every $B \geq 0$.
\end{lem}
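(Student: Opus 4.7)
The first step is to identify natural candidates for $L$ and $\zeta$. Since $\gamma\in\rad(K)$ means $\gamma^r\in K$ for some $r\in\nat$, any $K$-conjugate $\gamma'$ of $\gamma$ satisfies $(\gamma')^r=\gamma^r$, so $\gamma'/\gamma$ is a root of unity. Taking $L:=[K(\gamma):K]$ and letting $\gamma_1=\gamma,\gamma_2,\ldots,\gamma_L$ denote the $K$-conjugates of $\gamma$, I would set $\zeta:=\prod_i(\gamma_i/\gamma)$ and $\beta:=\zeta\gamma^L=\prod_i\gamma_i$. The element $\beta$ is, up to sign, the constant term of the minimal polynomial $f_K$ of $\gamma$ over $K$, so $\beta\in K$ automatically.

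The equation $M(\gamma)=S\cdot M(\beta)$ is most cleanly attacked via the absolute Weil height: using $M(\alpha)=[\rat(\alpha):\rat]h(\alpha)$, the fact that $h$ ignores roots of unity, and $h(\gamma^L)=Lh(\gamma)$, the target reduces to showing
\begin{equation*}
S=\frac{[\rat(\gamma):\rat]}{L\cdot[\rat(\beta):\rat]}\in\nat.
\end{equation*}
Here the Galois hypothesis on $K$ is decisive: the compositum $K(\gamma)=K\cdot\rat(\gamma)$ is Galois over $\rat(\gamma)$ with group isomorphic to $\gal(K/F)$ for $F:=K\cap\rat(\gamma)$, so $[K(\gamma):\rat(\gamma)]=[K:F]$. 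Multiplying along the tower $\rat\subset\rat(\gamma)\subset K(\gamma)$ then produces $[\rat(\gamma):\rat]=L\cdot[F:\rat]$, and $S$ simplifies to $[F:\rat]/[\rat(\beta):\rat]$.

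This is where the main obstacle lies: proving $\beta\in\rat(\gamma)$, since a priori we know only that $\beta\in K$. My plan is to show that the minimal polynomials of $\gamma$ over $K$ and over $F$ coincide. The first has degree $L$ by definition, and the second has degree at most $[\rat(\gamma):F]=L$; it also has degree at least $L$ because $f_K\in K[x]$ divides $f_F\in F[x]\subseteq K[x]$. Hence $f_K=f_F\in F[x]$, which forces $\beta=(-1)^L f_K(0)\in F\subseteq\rat(\gamma)$. Consequently $\rat(\beta)\subseteq F$, and $S=[F:\rat]/[\rat(\beta):\rat]$ is a positive integer, completing the main identity.

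The finiteness statement follows quickly from Northcott's theorem applied to the fixed number field $K$: only finitely many $\beta\in K$ satisfy $M(\beta)\leq B$, giving a finite set of possible values $M(\beta)\in[0,B]$. For any $\gamma\in\rad(K)$ with $M(\gamma)\leq B$, the first part of the lemma writes $M(\gamma)=S\cdot M(\beta_\gamma)$ with $\beta_\gamma\in K$ and $M(\beta_\gamma)\leq B$. Since $K$ contains only finitely many roots of unity, the positive values of $M$ on $K$ are bounded below by some $c_K>0$, forcing $S\leq B/c_K$. The finitely many products $S\cdot M(\beta_\gamma)$ then exhaust the possible values of $M(\gamma)$.
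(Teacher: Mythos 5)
The paper does not prove this lemma itself; it quotes it as Lemma~3.1 of the reference \cite{Samuels}, so there is no internal proof here to compare against. Your argument is correct and is the natural one for a statement of this shape: take $\beta := N_{K(\gamma)/K}(\gamma) = \prod_i \gamma_i$, observe that since $\gamma^r \in K$ each $K$-conjugate $\gamma_i$ equals $\gamma$ times an $r$-th root of unity, so $\beta = \zeta\gamma^L \in K$ with $L = [K(\gamma):K]$; then $M(\alpha) = [\rat(\alpha):\rat]\,h(\alpha)$ together with $h(\zeta\gamma^L) = L\,h(\gamma)$ reduces the claimed identity to showing that $[\rat(\gamma):\rat]\big/\bigl(L\,[\rat(\beta):\rat]\bigr)$ is a positive integer. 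Your Galois-theoretic computation gives $[\rat(\gamma):\rat] = L\,[F:\rat]$ with $F = K\cap\rat(\gamma)$, and your key observation that $f_K = f_F$ (both monic of degree $L$, with $f_K \mid f_F$ in $K[x]$) forces $\beta\in F$, so $[\rat(\beta):\rat]$ divides $[F:\rat]$ and $S\in\nat$. All of this is sound, and it genuinely uses that $K/\rat$ is Galois, as it should.

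One small wording slip in the finiteness paragraph: the positive lower bound $c_K$ on $\{M(\beta): \beta\in K,\ M(\beta) > 0,\ M(\beta)\leq B\}$ comes from Northcott (that set is finite, hence attains a positive minimum), not from ``$K$ contains only finitely many roots of unity,'' which only tells you that $M(\beta)=0$ contributes a single value. Since you already invoke Northcott in the same breath, this is a matter of emphasis rather than a gap, but the causal attribution as written is misleading.
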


It is an easy consequence of Lemma \ref{HeightInK} that $M(\gamma)$ is bounded below by the Mahler measure of an element in $K$.  Indeed, we have that
\begin{equation*}
	M(\gamma) = S\cdot M(\zeta\gamma^L) \geq M(\zeta\gamma^L)
\end{equation*}
and $\zeta\gamma^L\in K$.  Recall that 
\begin{equation*}
	C(\alpha) = \inf\{ M(\gamma): \gamma\in K_\alpha\setminus \tor(\algt)\}
\end{equation*}
and that $C(\alpha) > 0$ by Northcott's Theorem \cite{Northcott}.  We now see easily that
\begin{equation} \label{RadBound}
	M(\gamma) \geq C(\alpha)
\end{equation}
for all $\gamma\in \rad(K_\alpha)\setminus\tor(\algt)$.
We showed in Theorem 1.1 of \cite{Samuels3} that the infimum in $M_t(\alpha)$ is always attained.

\begin{thm} \label{Achieved}
	Suppose $\alpha$ is a non-zero algebraic number and $t\in (0,\infty]$.  Then there exists a point
	\begin{equation*}
		(\alpha_1,\alpha_2,\ldots) \in \tau^{-1}(\alpha) \cap \X(\rad(K_\alpha))
	\end{equation*}
	such that $M_t(\alpha) = \|(M(\alpha_1),M(\alpha_2),\ldots)\|_t$.
\end{thm}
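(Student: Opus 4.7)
The plan is to attack the problem in two stages: first reduce the infimum in $M_t(\alpha)$ to factorizations whose entries lie in $\rad(K_\alpha)$, and then extract an attaining factorization from a minimizing sequence using the finiteness furnished by Lemma \ref{HeightInK}. For the reduction I would use a Galois averaging argument: given any factorization $\alpha = \prod_i \alpha_i$ in $\algt$, work in a Galois extension $L/\rat$ containing all the $\alpha_i$, group Galois conjugates of each $\alpha_i$ appropriately, and show that the resulting factorization has all entries in $\rad(K_\alpha)$ with total $t$-norm no larger than the original. Once this reduction is granted, fix a minimizing sequence $\xx^{(k)} = (\alpha_{k,1}, \alpha_{k,2}, \ldots)$ in $\tau^{-1}(\alpha) \cap \X(\rad(K_\alpha))$, so that $\|(M(\alpha_{k,1}), M(\alpha_{k,2}), \ldots)\|_t$ converges to $M_t(\alpha)$.

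For $t < \infty$, the key point is that each nontorsion entry $\alpha_{k,j}$ satisfies $M(\alpha_{k,j}) \geq C(\alpha) > 0$ by \eqref{RadBound}. After gathering the torsion entries of $\xx^{(k)}$ into a single root of unity $\zeta_k$ and discarding trivial coordinates, I may write $\xx^{(k)} = (\zeta_k, \alpha_{k,1}, \ldots, \alpha_{k,N_k})$ with each listed $\alpha_{k,j}$ nontorsion. Then
\begin{equation*}
	N_k \cdot C(\alpha)^t \leq \sum_{j=1}^{N_k} M(\alpha_{k,j})^t \leq M_t(\alpha)^t + 1
\end{equation*}
for all sufficiently large $k$, so the lengths $N_k$ are uniformly bounded, and I pass to a subsequence on which $N_k = N$ is fixed. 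Each $M(\alpha_{k,j})$ then lies in the bounded interval $[C(\alpha), (M_t(\alpha)^t+1)^{1/t}]$, which by Lemma \ref{HeightInK} contains only finitely many values of $M(\gamma)$ for $\gamma \in \rad(K_\alpha)$. A pigeonhole argument coordinate by coordinate produces a further subsequence along which $M(\alpha_{k,j}) = m_j$ is independent of $k$ for each $j$. The norm of this common vector is then exactly $M_t(\alpha)$, and it is realized by the actual factorization $\xx^{(k)}$ for any $k$ in the subsequence. The case $t = \infty$ needs no length bound: since $\max_j M(\alpha_{k,j})$ takes only finitely many values in $[0, M_\infty(\alpha) + 1]$ by Lemma \ref{HeightInK}, along a minimizing sequence this maximum is eventually constant and equal to $M_\infty(\alpha)$.

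The main obstacle is the initial Galois averaging step that reduces to $\rad(K_\alpha)$; some care is needed to ensure that the averaging procedure does not increase the $t$-norm, particularly because raising to powers changes Mahler measures multiplicatively while the $t$-norm responds to such scaling nontrivially. With the reduction in hand, the remainder of the argument is a routine compactness extraction based on the positive lower bound $C(\alpha)$ and the discreteness of Mahler measures on $\rad(K_\alpha)$.
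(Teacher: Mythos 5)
The paper does not prove Theorem \ref{Achieved} here at all; it is recalled as Theorem 1.1 of \cite{Samuels3}, so there is no in-paper argument to compare against. That said, your two-stage strategy --- (a) reduce the infimum to factorizations with every coordinate in $\rad(K_\alpha)$, then (b) extract an attaining point from a minimizing sequence using the positivity of $C(\alpha)$ and the finiteness furnished by Lemma \ref{HeightInK} --- is exactly the architecture of the cited proof and its precursors in \cite{DubSmyth2} and \cite{Samuels}. Stage (b) is fully and correctly carried out: once the minimizing sequence lives in $\X(\rad(K_\alpha))$, the bound $N_k\cdot C(\alpha)^t\leq M_t(\alpha)^t+1$ gives a uniform length bound, each coordinate's Mahler measure lies in a finite set by Lemma \ref{HeightInK}, and the pigeonhole (or, for $t=\infty$, eventually-constant) extraction is sound.

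Stage (a), however, is where all the real content of the theorem sits, and you only gesture at it. The obstacle you flag is genuine, and your sketch does not get past it: passing from an arbitrary factorization $\alpha=\prod_i\alpha_i$ to one in $\rad(K_\alpha)$ via norms down to $K_\alpha$ followed by $[L:K_\alpha]$-th roots does not increase the Weil height of any factor, but it is the Mahler measure $M=\deg\cdot h$ that enters the $t$-norm, and the degrees of the new factors $\norm_{L/K_\alpha}(\alpha_i)^{1/[L:K_\alpha]}$ are not controlled by $\deg\alpha_i$. Making $M$ non-increasing coordinate-by-coordinate requires a specific reduction lemma (of the form: given $\alpha\in K$ with $K/\rat$ Galois and any factorization $\alpha=\prod_i\alpha_i$ in $\algt$, there exist $\beta_i\in\rad(K)$ and a root of unity $\zeta$ with $\alpha=\zeta\prod_i\beta_i$ and $M(\beta_i)\leq M(\alpha_i)$ for every $i$), and the proof of that lemma is not a routine averaging argument. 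Without stating and proving, or at least precisely citing, this reduction lemma, your proof is incomplete at the step on which everything else depends.
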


Recalling that $V=\algt/\tor(\algt)$, we may proceed with our proof of Theorem \ref{Topologies}.

\begin{proof}[Proof of Theorem \ref{Topologies}]
	If Lehmer's conjecture is true, then it follows from the results of \cite{FiliSamuels} that $M_\infty$ induces the discrete topology on $V$.
        Furthermore, we always have that $M_t(\alpha) \geq M_\infty(\alpha)$ for all $\alpha\in V$, implying that $M_t$ induces the discrete topology as well,
        establishing one direction of the theorem.
        
        Now assume that Lehmer's conjecture is false and that the topologies induced by $M_t$ and $M_\infty$ are equivalent.  Therefore, the $M_t$ ball
        of radius $1$ centered at $1$,
        \begin{equation*}
          B = \{\bar\gamma\in V: M_t(\bar\gamma) < 1\},
        \end{equation*}
        is open with respect to $M_\infty$.  Therefore, there exists $r > 0$ such that the $M_\infty$-ball
	\begin{equation} \label{Containment}
		B_0 = \{\bar\gamma\in V: M_\infty(\bar\gamma) \leq r\} \subset B.
	\end{equation}
	We have assumed that Lehmer's conjecture is false so there exists a non-trivial point $\bar\alpha\in B_0$.  If $s$ is a positive integer, then the strong triangle 
        inequality implies that $M_\infty(\bar\alpha^s) \leq M_\infty(\bar\alpha) \leq r$ so that $\bar\alpha^s \in B_0$ for all $s\in\nat$.  It follows from \eqref{Containment} that
        \begin{equation} \label{InB}
          \bar\alpha^s\in B
        \end{equation}
        for all $s\in \nat$.  We will now show that $M_t(\bar\alpha^s)$ tends to $\infty$ as $s\to \infty$.

        Select a point $\alpha\in\algt$ whose image in $V$ equals $\bar\alpha$.   In this case, $\alpha$ is not a root of unity.  By
        Theorem \ref{Achieved}, there exists a root of unity $\zeta$ and points
        \begin{equation*}
          \alpha_1,\ldots,\alpha_N \in \rad(K_\alpha)\setminus
          \tor(\algt)
        \end{equation*}
        such that
        \begin{equation*}
          \alpha^s = \zeta\alpha_1\cdots\alpha_N
        \end{equation*}
        and
        \begin{equation} \label{NaiveRep}
          M_t(\alpha^s)^t = \sum_{n=1}^N M(\alpha_n)^t.
        \end{equation}
        
	Recall that the Weil height on $\alpha\in\alg$ is given by
	\begin{equation*}
		h(\alpha) = \frac{M(\alpha)}{\deg\alpha}.
	\end{equation*}
        Using \eqref{NaiveRep}, we have that
        \begin{align*}
          M_t(\alpha^s)^t & =  \sum_{n=1}^N M(\alpha_n)^{t-1}M(\alpha_n) \\
          & \geq \sum_{n=1}^N M(\alpha_n)^{t-1}h(\alpha_n) \\
          & \geq \min_{1\leq n\leq N}\{ M(\alpha_n)\}^{t-1} \cdot \sum_{n=1}^N h(\alpha_n)
        \end{align*}
	It is well-known that the Weil height has the triangle
        inequality $h(\alpha\beta) \leq h(\alpha) + h(\beta)$ as well
        as the identity $h(\alpha) = h(\zeta\alpha)$ for all roots of
        unity $\zeta$.  It follows that
	\begin{align*} \label{SLowerBound}
		M_t(\alpha^s)^t & \geq  \min_{1\leq n\leq N}\{M(\alpha_n)\}^{t-1} \cdot h(\alpha_1\cdots\alpha_N) \\
                & =  \min_{1\leq n\leq N}\{M(\alpha_n)\}^{t-1} \cdot h(\alpha^s)
	\end{align*}
        Furthermore, we have that $h(\alpha^r) = |r|\cdot h(\alpha)$
        for all integers $r$.  This leaves
        \begin{equation} \label{SecondNaiveBound}
          	M_t(\alpha^s)^t \geq s\cdot h(\alpha) \cdot
                \min_{1\leq n\leq N}\{M(\alpha_n)\}^{t-1}
        \end{equation}
        We know that $\alpha$ is not a root of unity so that
        $h(\alpha) > 0$.  Also, We know that $\alpha_n\in\rad(K_\alpha)
        \setminus \tor(\algt)$ for all $n$.  It follows from
        \eqref{RadBound} that $M(\alpha_n) \geq C(\alpha)$ for all
        $n$.  By \eqref{SecondNaiveBound}, we obtain that
        \begin{equation*}
          M_t(\alpha^s)^t \geq s\cdot h(\alpha) \cdot C(\alpha)^{t-1},
        \end{equation*}
        the right hand side of which tends to infinity as $s\to \infty$.  This proves that $\bar\alpha^s\not\in B$ for sufficiently large
        $s$, contradicting \eqref{InB}.
\end{proof}


\section{$\alpha$-standard and $\alpha$-exceptional points} \label{StandardExceptional} 

All of our proofs regarding $\alpha$-standard and $\alpha$-exceptional points are based upon the following result.

\begin{thm} \label{FiniteMin}
	Let $\alpha$ be a non-zero algebraic number and $T$ a positive real number.  Then there exists a finite collection of points
	$\mathcal X = \mathcal X(\alpha, T) \subseteq \bigreal$ such that
	\begin{equation*}
		M_t(\alpha) = \min\{ \|\xx\|_t: \xx\in \mathcal X\}
	\end{equation*}
	for all $t\leq T$.
\end{thm}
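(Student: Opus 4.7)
The plan is to invoke Theorem \ref{Achieved} and then show that, although the minimizing factorization of $\alpha$ may depend on $t$, the associated vector of Mahler measures is forced to lie in a fixed finite subset $\mathcal{X} \subseteq \bigreal$ independent of $t \in (0,T]$. Once such an $\mathcal{X}$ is produced, the conclusion is immediate: every $\xx \in \mathcal{X}$ arises from a genuine factorization of $\alpha$, so $\|\xx\|_t \geq M_t(\alpha)$ by definition, while Theorem \ref{Achieved} simultaneously guarantees that some element of $\mathcal{X}$ attains $M_t(\alpha)$.

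Assume $\alpha$ is not a root of unity, the contrary case being trivial. For each $t \in (0,T]$, Theorem \ref{Achieved} produces a factorization $\alpha = \alpha_1\cdots\alpha_N$ with each $\alpha_n \in \rad(K_\alpha)$ such that $M_t(\alpha)^t = \sum_{n=1}^N M(\alpha_n)^t$. Absorbing all torsion factors into one non-torsion factor, I may further assume each non-trivial $\alpha_n$ lies in $\rad(K_\alpha) \setminus \tor(\algt)$. The trivial factorization gives $M_t(\alpha) \leq M(\alpha)$, and \eqref{RadBound} yields $M(\alpha_n) \geq C(\alpha)$ for each non-trivial factor, so
$$N \cdot C(\alpha)^t \leq \sum_{n=1}^N M(\alpha_n)^t = M_t(\alpha)^t \leq M(\alpha)^t.$$
Since $\alpha \in K_\alpha \setminus \tor(\algt)$ forces $C(\alpha) \leq M(\alpha)$, the quantity $(M(\alpha)/C(\alpha))^t$ is non-decreasing in $t$, so $N \leq (M(\alpha)/C(\alpha))^T$ uniformly for $t \in (0,T]$.

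At the same time, $M(\alpha_n)^t \leq M_t(\alpha)^t \leq M(\alpha)^t$ places each $M(\alpha_n)$ in the interval $[C(\alpha), M(\alpha)]$. Since $\alpha_n \in \rad(K_\alpha)$, Lemma \ref{HeightInK} guarantees that the set of admissible values $M(\alpha_n)$ is finite. Combined with the uniform bound on $N$, this shows that the vector $(M(\alpha_1), \ldots, M(\alpha_N), 0, 0, \ldots) \in \bigreal$ is drawn from a finite subset of $\bigreal$ independent of $t$. Take $\mathcal{X}$ to be this finite subset.

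The main obstacle is controlling $N$ uniformly for $t \in (0,T]$, and this is precisely what dictates the cutoff at a finite $T$ in the statement: the estimate $N \leq (M(\alpha)/C(\alpha))^t$ has no evident uniform bound as $t \to \infty$. Within the range $t \leq T$, however, monotonicity of the exponential in $t$ supplies the uniform bound, and the finiteness from Lemma \ref{HeightInK} handles the remaining degree of freedom in the choice of the individual $M(\alpha_n)$.
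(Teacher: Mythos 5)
Your proof follows the same strategy as the paper's: invoke Theorem \ref{Achieved} to get a minimizing factorization, use \eqref{RadBound} to bound each non-torsion factor's Mahler measure below by $C(\alpha)$, deduce a $T$-dependent uniform bound on the number of factors, and observe via Lemma \ref{HeightInK} that the possible values $M(\alpha_n)$ form a finite set. The resulting $\mathcal X$ is finite and realizes $M_t(\alpha)$ exactly as you argue.

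One small technical point deserves care. You write ``absorbing all torsion factors into one non-torsion factor,'' but the Mahler measure is not invariant under multiplication by a root of unity: in general $M(\zeta\beta) \ne M(\beta)$, since $\zeta\beta$ may have a different degree. If you fold a root of unity $\zeta$ into a non-torsion factor $\beta_1$, the identity $\sum_n M(\alpha_n)^t = M_t(\alpha)^t$ may no longer hold, which would break the displayed chain of inequalities. The paper avoids this by merging all torsion factors into a \emph{single} root of unity (one can always do this, since products of roots of unity are roots of unity and contribute $0$ to the norm), so at most one factor is torsion; the counting then yields $N-1 \leq (M(\alpha)/C(\alpha))^t$ rather than $N \leq (M(\alpha)/C(\alpha))^t$, and the cap becomes $J = \lfloor (M(\alpha)/C(\alpha))^T + 1\rfloor$. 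This is a trivial fix and does not affect the structure of your argument, which otherwise matches the paper's.
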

\begin{proof}
	By Lemma \ref{HeightInK}, the set
	\begin{equation} \label{RadBounded}
		R(\alpha) = \left\{M(\gamma): \gamma\in\rad(K_\alpha)\ \mathrm{and}\ M(\gamma)\leq M(\alpha)\right\}
	\end{equation}
	is finite and $C(\alpha) = \min R(\alpha)\setminus\tor(\algt)$.  We also note that $M(\alpha) \geq C(\alpha) > 0$.  Next, we define
	\begin{equation*}
		J = J(\alpha,T) = \left\lfloor\left(\frac{M(\alpha)}{C(\alpha)}\right)^T + 1\right\rfloor.
  	\end{equation*}
  	Finally, we write
	\begin{equation*} \label{XDef}
		\mathcal X = \left\{(M(\alpha_1),\ldots,M(\alpha_N),0,0,\ldots): M(\alpha_n)\in R(\alpha),\  N\leq J(\alpha,T)\ \mathrm{and}\ \alpha=\prod_{n=1}^N\alpha_n\right\}.
	\end{equation*}
	We claim that $\mathcal X$ is finite and that
	\begin{equation} \label{MtMin}
		M_t(\alpha) = \min\{ \|\xx\|_t: \xx\in \mathcal X\}
	\end{equation}
	for all $t\leq T$.  We have immediately that $\mathcal X$ injects into
	\begin{equation*}
		\underbrace{R(\alpha) \times \cdots \times R(\alpha)}_{J\ \mathrm{times}}.
	\end{equation*}
	Since each set $R(\alpha)$ is finite, it follows that $\mathcal X$ is finite.
	
	Now we must verify \eqref{MtMin}.  By the definition of $M_t(\alpha)$, we see quickly that
	\begin{equation} \label{PrelimMtMin}
		M_t(\alpha) \leq \min\{ \|\xx\|_t: \xx\in \mathcal X\}.
	\end{equation}
	To show that we always have equality in \eqref{PrelimMtMin}, we must show that, for every positive real $t\leq T$, there exists $\xx\in \mathcal X$ such 
	that $M_t(\alpha) = \|\xx\|_t$.  By Theorem \ref{Achieved}, we know there exist points $\alpha_1,\alpha_2,\ldots,\alpha_{N} \in \rad(K_\alpha)$ 
	such that $\alpha= \alpha_1\cdots\alpha_{N}$ and
  	\begin{equation} \label{tInfimum}
   	 M_t(\alpha) = \|(M(\alpha_1),\ldots,M(\alpha_{N}),0,0,\ldots)\|_t.
  	\end{equation}
 	We may assume without loss of generality that at most one of $\alpha_1,\ldots,\alpha_{N}$ is a root of unity.  Now we write 
  	\begin{equation*}
  		\mm = (M(\alpha_1),\ldots,M(\alpha_{N}),0,0,\ldots)
  	\end{equation*}
  	so we have that
  	\begin{equation*}
  		M_t(\alpha) = \|\mm\|_t.
  	\end{equation*}
  	We must show that $\mm\in \mathcal X$.
  
  By our above remarks, we know that $\alpha_n\in\rad(K_\alpha)$ for all $n$.  Furthermore, we have that $M_t(\alpha) \leq M(\alpha)$, so we also
  obtain that $M(\alpha_n) \leq M(\alpha)$, which implies that $M(\alpha_n)\in R(\alpha)$.
  For every $n$ such that $\alpha_n$ is not a root of unity, we have that $M(\alpha_n) \geq C(\alpha)$ so we obtain
  \begin{equation*}
    M(\alpha)^t \geq M_t(\alpha)^t =  \sum_{n=1}^N M(\alpha_n)^t \geq (N-1)\cdot C(\alpha)^t,
  \end{equation*}
  and therefore,
  \begin{equation*}
    N-1 \leq \left(\frac{M(\alpha)}{C(\alpha)}\right)^t.
  \end{equation*}
  It is clear that $M(\alpha) \geq C(\alpha)$ which yields
  \begin{equation} \label{MaxTerms}
    N\leq  J(\alpha,T)
  \end{equation}
  showing that $\mm\in\mathcal X$ and completing the proof.
\end{proof}

We noted earlier that the continuity of $\mu_\alpha$ was proved in \cite{Samuels3}.  However, Theorem \ref{FiniteMin} gives us a much simpler proof.

\begin{cor} \label{Continuous}
  $\mu_\alpha$ is continuous on $(0,\infty)$.
\end{cor}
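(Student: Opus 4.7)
The plan is to derive continuity of $\mu_\alpha$ on $(0,\infty)$ as an almost immediate consequence of Theorem \ref{FiniteMin}, which reduces the infimum in the definition of $M_t(\alpha)$ to a minimum over a finite set for all $t$ in a bounded range.

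First I would fix an arbitrary $T>0$ and invoke Theorem \ref{FiniteMin} to produce a finite set $\mathcal X = \mathcal X(\alpha, T) \subset \bigreal$ such that
\begin{equation*}
  \mu_\alpha(t) = \min\{\|\xx\|_t : \xx\in \mathcal X\}
\end{equation*}
for all $t \in (0,T]$. Since every $\xx \in \mathcal X$ has only finitely many nonzero coordinates, the function $t \mapsto \|\xx\|_t = \left(\sum_n |x_n|^t\right)^{1/t}$ is a composition of elementary continuous functions on $(0,\infty)$, using the convention $0^t = 0$. Hence for each individual $\xx \in \mathcal X$, the map $t \mapsto \|\xx\|_t$ is continuous on $(0,\infty)$.

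Next, I would appeal to the elementary fact that the pointwise minimum of finitely many continuous real-valued functions on an interval is itself continuous. Applied to the finite family $\{t \mapsto \|\xx\|_t : \xx \in \mathcal X\}$, this shows that $\mu_\alpha$ is continuous on $(0,T]$. Since $T > 0$ was arbitrary and continuity is a local property, $\mu_\alpha$ is continuous on all of $(0,\infty)$.

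There is no genuine obstacle here: the only mild subtlety is to confirm that $t \mapsto |x|^t$ is continuous at every $t \in (0,\infty)$ (including when $x = 0$), and that the exponent $1/t$ poses no problem away from $t = 0$. Both facts are routine. The substance of the result is entirely contained in Theorem \ref{FiniteMin}; this corollary simply repackages that finite-minimum representation through the continuity of each $t$-norm.
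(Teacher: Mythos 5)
Your proof is correct and follows exactly the paper's argument: both invoke Theorem \ref{FiniteMin} to express $\mu_\alpha$ on $(0,T]$ as a minimum of finitely many functions of the form $t\mapsto\|\xx\|_t$, and then use the fact that a finite minimum of continuous functions is continuous. You merely spell out the routine continuity of $t\mapsto\|\xx\|_t$ and the localization step, which the paper leaves implicit.
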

\begin{proof}
  On an interval $(0,T]$, Theorem \ref{FiniteMin} establishes that $\mu_\alpha$ is the minimum of a finite number of continuous functions.  It follows that
  $\mu_\alpha$ is itself continuous.
\end{proof}

Before we can prove Theorem \ref{FiniteBelow2}, we give one additional definition along with a lemma. 
For a positive real number $T$ and an algebraic number $\alpha$, we will, for the remainder of this paper, let
$\mathcal X = \mathcal X(\alpha,T)$ be as in the conclusion of Theorem \ref{FiniteMin}.  We say that $s\leq T$ is an {\it intersection point with 
respect to $\mathcal X$} if there exist 
$\xx, \yy\in \mathcal X$ such that $\|\xx\|_s = \|\yy\|_s$ but $t\mapsto \|\xx\|_t$ is not the same function as $t\mapsto \|\yy\|_t$.

\begin{lem} \label{NonIntersection}
	Suppose that $\alpha$ is a non-zero algebraic number and $T$ is a positive real number.  If $I\subseteq(0,T]$ is an interval containing no intersection points
	with respect to $\mathcal X(\alpha,T)$ then $I$ is $\alpha$-uniform.
\end{lem}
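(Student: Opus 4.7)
The plan is to fix a single point $\xx_0 \in \mathcal{X}(\alpha,T)$ that attains the minimum at some chosen $t_0 \in I$, and then show this same $\xx_0$ witnesses $\mu_\alpha(t) = \|\xx_0\|_t$ for \emph{every} $t \in I$. Since $\mathcal{X}(\alpha,T)$ is finite and
\begin{equation*}
\mu_\alpha(t_0) = \min\{\|\xx\|_{t_0} : \xx \in \mathcal{X}(\alpha,T)\}
\end{equation*}
by Theorem \ref{FiniteMin}, such an $\xx_0$ exists. Establishing this single claim immediately gives the $\alpha$-uniformity of $I$.

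To prove the claim, I would argue by contradiction. Suppose there is some $t_1 \in I$ at which $\xx_0$ fails to be a minimizer, so by Theorem \ref{FiniteMin} there exists $\yy \in \mathcal{X}(\alpha,T)$ with $\|\yy\|_{t_1} < \|\xx_0\|_{t_1}$. Consider
\begin{equation*}
h(t) = \|\xx_0\|_t - \|\yy\|_t,
\end{equation*}
which is continuous on $(0,\infty)$ since each of the finitely supported $L^t$-norms is continuous in $t$. By the choice of $\xx_0$ we have $h(t_0) \leq 0$, while $h(t_1) > 0$ by assumption.

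Now split into two cases. If $h(t_0) < 0$, the intermediate value theorem produces some $s$ strictly between $t_0$ and $t_1$ with $h(s) = 0$. Since $h$ changes sign near $s$, the two functions $t \mapsto \|\xx_0\|_t$ and $t \mapsto \|\yy\|_t$ cannot coincide, so $s$ is an intersection point with respect to $\mathcal{X}(\alpha,T)$; because $I$ is an interval containing $t_0$ and $t_1$, it contains $s$, contradicting the hypothesis on $I$. If instead $h(t_0) = 0$, then $\yy$ is also a minimizer at $t_0$; either the two norm-functions are identical (which forces $\|\xx_0\|_{t_1} = \|\yy\|_{t_1}$, contradicting the strict inequality at $t_1$) or they differ, in which case $t_0$ itself is an intersection point lying in $I$, again a contradiction.

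The substantive step is really the conceptual one: the definition of intersection point hinges on distinguishing \emph{pointwise} equality of $L^t$-norms from equality \emph{as functions of $t$}, and the proof works precisely because this distinction allows IVT to convert a sign change on $I$ into an intersection point on $I$. I do not expect any serious technical obstacle beyond keeping this bookkeeping straight; once it is in place, the finiteness of $\mathcal{X}(\alpha,T)$ and continuity of the norm functions carry the argument through.
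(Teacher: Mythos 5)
Your proposal is correct and uses the same essential mechanism as the paper: apply the intermediate value theorem to the difference $\|\xx\|_t - \|\yy\|_t$ of two minimizers at different points of $I$, and use the sign change to produce an intersection point inside $I$. Your version is organized slightly more directly (fix a minimizer at $t_0$ and show it persists, treating the boundary case $h(t_0)=0$ explicitly) whereas the paper negates the conclusion at the outset, but this is a stylistic rather than substantive difference.
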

\begin{proof}
	Assume that $I$ is not $\alpha$-uniform and fix a point $t\in I$.  By definition of $\alpha$-uniform, for every point $\xx\in \mathcal X$ such that 
	$M_t(\alpha) = \|\xx\|_t$, there exists $s\in I$ such that $M_s(\alpha) < \|\xx\|_s$.  We may select $\yy\in \mathcal X$ such that $M_s(\alpha) = \|\yy\|_s$ 
	and note that $M_t(\alpha) \leq \|\yy\|_t$.  Hence, we have that
	\begin{equation*}
		\|\xx\|_t \leq \|\yy\|_t\ \mathrm{and}\ \|\xx\|_s > \|\yy\|_s.
	\end{equation*}
	By the Intermediate Value Theorem, there exists a point $r$ between $s$ and $t$ such that $\|\xx\|_r = \|\yy\|_r$.  This means that $I$ contains
	an intersection point, a contradiction.
\end{proof}

We are now prepared to prove Theorem \ref{FiniteBelow2}.

\begin{proof}[Proof of Theorem \ref{FiniteBelow2}]
	We first show that there are only finitely many intersection points of $\mathcal X$.  Let 
	\begin{equation*}
		\xx = (x_1,\ldots,x_N,0,0,\ldots)\ \mathrm{and}\ \yy = (y_1,\ldots,y_M,0,0,\ldots)
	\end{equation*}
	be elements of $\mathcal X$ such that $x_n,y_n \geq 0$.  Further suppose that $t\mapsto \|\xx\|_t$ and $t\mapsto \|\yy\|_t$ are distinct functions.  Now write
	\begin{equation*}
		F(z) = \sum_{n=1}^N x_n^z - \sum_{m=1}^M y_m^z
	\end{equation*}
	and note that $F(z)$ is an entire function with $F\not\equiv 0$.  If $F$ has infinitely many zeros $[0,T]$,
	then these zeros have a cluster point in $\com$, a contradiction.  So $F$ may only have finitely many zeros in $[0,T]$, and hence, the functions
	$\|\xx\|_t$ and $\|\yy\|_t$ may only intersect in finitely many points in $[0,T]$.  It now follows that there are only finitely many intersection points.
	
	Next, assume that $t$ is not an intersection point.  Since the set of intersection points is finite, we know there exists a neighborhood $I$ of 
	$t$ that contains no intersection points.  It now follows from Lemma \ref{NonIntersection} that $I$ is $\alpha$-uniform so that $t$ is $\alpha$-standard.
	In other words, we have shown that every $\alpha$-exceptional point in $(0,T)$ must also be an intersection point.  However, there are only finitely many 
	intersection points, so there are only finitely many $\alpha$-exceptional points in $(0,T)$.
\end{proof}

We now proceed with the proof of Theorem \ref{UniformIntervals}, which requires the following two lemmas.
The first of these lemmas shows that even $\alpha$-exceptional points have neighborhoods that are relatively well behaved.

\begin{lem} \label{Neighborhood}
	If $t\in (0,\infty)$ then there exists a neighborhood $(a,b)$ of $t$ such that $(a,t]$ and $[t,b)$ are $\alpha$-uniform.
\end{lem}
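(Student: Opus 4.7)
The plan is to exploit the finite structure from Theorem \ref{FiniteMin} together with the finiteness of the intersection points established during the proof of Theorem \ref{FiniteBelow2}. Fix any $T > t$ (for concreteness, take $T = t+1$) and let $\mathcal{X} = \mathcal{X}(\alpha, T)$ be the finite collection of points given by Theorem \ref{FiniteMin}, so that $\mu_\alpha(s) = \min\{\|\xx\|_s : \xx \in \mathcal{X}\}$ for all $s \in (0,T]$.

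Recall from the proof of Theorem \ref{FiniteBelow2} that for any two distinct elements $\xx, \yy \in \mathcal{X}$, the functions $s \mapsto \|\xx\|_s$ and $s \mapsto \|\yy\|_s$ agree at only finitely many points of $(0,T]$. Since $\mathcal{X}$ is finite, the total set of intersection points with respect to $\mathcal{X}$ is also finite. Hence there exists a neighborhood $(a,b) \subseteq (0,T]$ of $t$ whose only possible intersection point is $t$ itself; in particular, both of the subintervals $(a,t)$ and $(t,b)$ contain no intersection points.

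Applying Lemma \ref{NonIntersection} to $(a,t)$ and to $(t,b)$, each of these open intervals is $\alpha$-uniform. Thus there exist $\xx, \yy \in \mathcal{X}$ with $\mu_\alpha(s) = \|\xx\|_s$ for all $s \in (a,t)$ and $\mu_\alpha(s) = \|\yy\|_s$ for all $s \in (t,b)$. It remains only to extend each of these identities to the endpoint $s=t$. Here we invoke the continuity of $\mu_\alpha$ (Corollary \ref{Continuous}) together with the obvious continuity in $s$ of $\|\xx\|_s$ and $\|\yy\|_s$; taking the limit as $s \to t^-$ gives $\mu_\alpha(t) = \|\xx\|_t$, and likewise $s \to t^+$ gives $\mu_\alpha(t) = \|\yy\|_t$. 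Therefore $(a,t]$ and $[t,b)$ are both $\alpha$-uniform, as desired.

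The only real content is the finiteness of intersection points, which has already been carried out in the proof of Theorem \ref{FiniteBelow2}; everything else is a straightforward combination of Lemma \ref{NonIntersection} with the continuity of $\mu_\alpha$. I do not expect any substantive obstacle — the main thing to be careful about is that we separate the neighborhood into its two halves around $t$, since $t$ itself may be an intersection point (indeed, the $\alpha$-exceptional case is precisely where $\xx \neq \yy$ above).
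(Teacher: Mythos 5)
Your proof is correct and follows essentially the same route as the paper: fix $T=t+1$, take $\mathcal X=\mathcal X(\alpha,T)$ from Theorem \ref{FiniteMin}, use finiteness of intersection points to isolate a neighborhood $(a,b)$ of $t$ containing at most one intersection point (namely $t$), apply Lemma \ref{NonIntersection} to $(a,t)$ and $(t,b)$, and close up the endpoint $t$ via continuity of $\mu_\alpha$ (Corollary \ref{Continuous}). The only cosmetic difference is that the paper first dispatches the trivial case in which $t$ is $\alpha$-standard, while your argument covers that case automatically.
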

\begin{proof}
	If $t$ is $\alpha$-standard, then the result is obvious, so we may assume that $t$ is $\alpha$-exceptional.
	
	Set $T=t+1$ and let $\mathcal X = \mathcal X(\alpha,T)$ be the set from the conclusion of Theorem \ref{FiniteMin}.  Since $\mathcal X$ has only finitely
	many intersection points, there must exist a neighborhood $(a,b)$ of $t$ containing no intersection points except $t$.  In particular, $(t,b)$ contains
	no intersection points, so it follows from Lemma \ref{NonIntersection} that $(t,b)$ is $\alpha$-uniform.  Therefore, there exists $\xx\in\mathcal X$ such that
	$M_s(\alpha) = \|\xx\|_s$ for all $s\in(t,b)$.
	
	By Theorem \ref{Continuous}, we know that $\mu_\alpha$ is continuous on $[t,b)$.  Of course, $s\mapsto \|\xx\|_s$ is also continuous on this interval so that
	\begin{equation*}
		M_t(\alpha) = \lim_{s\to t^+} M_s(\alpha) = \lim_{s\to t^+} \|\xx\|_s = \|\xx\|_t
	\end{equation*}
	showing that $M_s(\alpha) = \|\xx\|_s$ for all $s\in [t,b)$.  This establishes that $[t,b)$ is $\alpha$-uniform.  A similar argument is used to show that
	$(a,t]$ is $\alpha$-uniform, completing the proof.
\end{proof}

Our next lemma shows that, in order to prove that an interval $I$ is $\alpha$-uniform, we need only show the existence of a cover of $I$ by
$\alpha$-uniform open intervals.  Here, we understand that open means open with respect to $S$.

\begin{lem} \label{CoverLemma}
  Suppose $S\subset (0,\infty)$ is any interval.  If there exists a finite cover of $S$ by $\alpha$-uniform open intervals, then $S$
  is $\alpha$-uniform.
\end{lem}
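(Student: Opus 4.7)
The plan is to exploit the overlaps among members of the finite cover together with the analytic argument used in the proof of Theorem~\ref{FiniteBelow2}. Label the cover as $I_1, \ldots, I_k$, and for each $j$ let $\xx_j \in \bigreal$ be a witness to the $\alpha$-uniformity of $I_j$, so that $\mu_\alpha(t) = \|\xx_j\|_t$ for every $t \in I_j$. The goal is to produce a single $\xx$ that works on all of $S$; the natural choice is $\xx := \xx_1$, and it is enough to show that $t \mapsto \|\xx_j\|_t$ and $t \mapsto \|\xx_{j'}\|_t$ coincide as functions on $(0, \infty)$ for every pair of indices $j, j'$.

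The key step I would carry out first is the \emph{overlap implies global agreement} claim: whenever $I_j \cap I_{j'} \neq \emptyset$, one has $\|\xx_j\|_t = \|\xx_{j'}\|_t$ for every $t > 0$. Since both $I_j$ and $I_{j'}$ are open in $S$, their intersection is a nonempty open subset of the interval $S$, hence contains a nondegenerate subinterval $J$. On $J$ both $\|\xx_j\|_t$ and $\|\xx_{j'}\|_t$ equal $\mu_\alpha(t)$, so after writing the coordinates explicitly and raising to the $t$-th power, the entire function
\begin{equation*}
 F(z) = \sum_n x_{j,n}^z - \sum_n x_{j',n}^z
\end{equation*}
vanishes on $J$, which has a cluster point in $\com$; hence $F \equiv 0$. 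Restricting to real $z = t > 0$ and taking $t$-th roots yields the claim, by exactly the mechanism used in the proof of Theorem~\ref{FiniteBelow2}.

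To finish, I would propagate this pairwise equality from overlapping pairs to all pairs using the connectedness of $S$. Impose on $\{I_1, \ldots, I_k\}$ the equivalence relation generated by the overlap relation. If this had more than one class, then setting $U_i = \bigcup_{I \in \mathcal{C}_i} I$ for each class $\mathcal{C}_i$ would partition $S$ into two disjoint nonempty subsets, each open in $S$, contradicting the connectedness of the interval $S$. Consequently all of the functions $t \mapsto \|\xx_j\|_t$ agree on $(0, \infty)$, so $\mu_\alpha(t) = \|\xx\|_t$ for every $t \in S$, which is precisely the statement that $S$ is $\alpha$-uniform. The only genuine obstacle is the overlap-implies-global-agreement step, and once that analytic fact is in hand the remainder is a routine connectedness argument; the one minor subtlety worth verifying explicitly is that the intersection of two members of the open cover, when nonempty, really does contain a nondegenerate subinterval, which is automatic from the subspace topology on the interval $S$.
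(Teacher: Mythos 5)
Your proof is correct and uses essentially the same key ingredient as the paper's: on any nonempty overlap $I_j \cap I_{j'}$, the two entire functions $z \mapsto \sum_n x_{j,n}^z$ and $z \mapsto \sum_n x_{j',n}^z$ agree on a set with a limit point and hence coincide globally. The paper implements the combinatorial step by iteratively merging overlapping intervals (connectedness supplies an $I_k$ overlapping $I_1$, so $I_1 \cup I_k$ is $\alpha$-uniform, reducing the cover size by one), whereas you take the transitive closure of the overlap relation and invoke connectedness once; these are two bookkeeping variants of the same argument and both are correct.
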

\begin{proof}
  Suppose $\{I_n:1\leq n\leq N\}$ is a collection of open intervals in $(0,\infty)$ such that
	\begin{equation*}
		S = \bigcup_{n=1}^N I_n
	\end{equation*}
	and $I_n$ is $\alpha$-uniform for all $n$.  Since $S$ is connected, we must have that
	\begin{equation*}
		I_1 \bigcap \left(\bigcup_{n=2}^N I_n\right) \ne \emptyset
	\end{equation*}
	so that there exists some $k$ such that $I_1\cap I_k \ne \emptyset$.  Since both $I_1$ and $I_k$ are open intervals,
	their intersection must be a non-empty open interval.  We know that $I_1$ and $I_k$ are $\alpha$-uniform, so there exist points
	$(x_1,\ldots,x_L,0,0,\ldots), (y_1,\ldots,y_M,0,0\ldots) \in \bigreal$ such that
	\begin{equation*}
		M_t(\alpha)^t = \sum_{l=1}^L x_l^t\quad\mathrm{for\ all}\ t\in I_1
	\end{equation*}
	and
	\begin{equation*}
		M_t(\alpha)^t = \sum_{m=1}^M y_m^t\quad\mathrm{for\ all}\ t\in I_k.
	\end{equation*}
	These functions must be equal on the open interval $I_1\cap I_k$.  That is, we have that
	\begin{equation} \label{EntireEqual}
		\sum_{l=1}^L x_l^z = \sum_{m=1}^M y_m^z
	\end{equation}
	on a set having a limit point in $\com$.  Since both sides of \eqref{EntireEqual} are entire functions, we conclude that they must be equal on all of $\com$.
	In particular, we have shown that
	\begin{equation*}
		M_t(\alpha)^t = \sum_{l=1}^L x_l^t\quad\mathrm{for\ all}\ t\in I_1\cup I_k
	\end{equation*}
	implying that $I_1\cup I_k$ is $\alpha$-uniform.  We now see that the set of intervals
	\begin{equation*}
		\{I_1\cup I_k\} \cup \{I_n: 2\leq n\leq N\ \mathrm{and}\ n\ne k\}
	\end{equation*}
	is a cover of $S$ by $N-1$ $\alpha$-uniform open intervals.  Repeating the above argument $N-1$ more times, we obtain a cover containing only one interval.
\end{proof}

In view of the above lemmas, the proof of Theorem \ref{UniformIntervals} is fairly straightforward.

\begin{proof}[Proof of Theorem \ref{UniformIntervals}]
  If $[a,b]$ is $\alpha$-uniform, then it is clear that every point in $(a,b)$ is $\alpha$-standard.  Similarly, if $(0,a]$ is $\alpha$-uniform
  then every point in $(0,a)$ is $\alpha$-standard.  We now prove the opposite directions of both statements beginning with the first.

  Assume now that every point in $(a,b)$ is $\alpha$-standard.  Hence, there exists a cover $\mathcal I$ of $(a,b)$ by $\alpha$-uniform
  open intervals.  Furthermore, by Lemma \ref{Neighborhood}, there exist points $c, d\in (a,b)$ such that the intervals
   \begin{equation*}
     J_1 = [a,c)\quad\mathrm{and}\quad J_2 = (d,b]
   \end{equation*}
   are $\alpha$-uniform.  Therefore, the collection of intervals
   \begin{equation*}
     \{J_1\} \cup \{J_2\} \cup \mathcal I
   \end{equation*}
   forms a cover of $[a,b]$ by $\alpha$-uniform intervals which are all open with respect to $[a,b]$.  Since $[a,b]$ is compact there exists
   a finite subcover and the result follows from Lemma \ref{CoverLemma}.

   To prove the second statement, recall that \cite{Samuels3} establishes $\mu_\alpha$ to be constant in a neighborood of $0$.  In particular, there exists $\varepsilon >0$
   such that $(0,2\varepsilon)$ is $\alpha$-uniform.  We know that $(0,a)$ contains no $\alpha$-standard points, so that $(\varepsilon,a)$ does not either.  By the first
   statement of this theorem, we know that $(\varepsilon,a]$ is $\alpha$-uniform.  Certainly
   \begin{equation*}
     (0,2\varepsilon) \cup (\varepsilon,a]
   \end{equation*}
   is a finite cover of $(0,a]$ by $\alpha$-uniform intervals that are open in $(0,a]$.  It follows from Lemma \ref{CoverLemma} that $(0,a]$ is $\alpha$-uniform.
\end{proof}

Equipped with Theorems \ref{FiniteBelow2} and \ref{UniformIntervals}, we can give our proof of Corollary \ref{NonOver}.

\begin{proof}[Proof of Corollary \ref{NonOver}]
  By Theorem \ref{FiniteBelow2}, there are finitely many exceptional points in $(0,T)$.  Suppose these points are given by
  \begin{equation*}
    0 < t_1 < t_2 < \cdots < t_N < T.
  \end{equation*}
  We write $I_0 = (0,t_1]$, $I_N = [t_N,T]$ and $I_n = [t_n,t_{n+1}]$
  for all other values of $n$.  We write
  \begin{equation*}
    \mathcal I = \bigcup_{n=0}^N \{I_n\}
  \end{equation*}
  and claim that $\mathcal I$ satisfies the required properties.
  Clearly, $\mathcal I$ is a finite set of non-overlapping closed
  intervals with
  \begin{equation*}
    (0,T] = \bigcup_{I\in\mathcal I} I,
  \end{equation*}
  which establishes \eqref{Union}.
  The interior of $I_n$ contains only $\alpha$-standard points, so by Theorem \ref{UniformIntervals}, $I_n$ is
  $\alpha$-uniform for all $n$, verifying \eqref{Uniforms}.  
  
  Now assume that $t\in (0,T)$ is $\alpha$-exceptional.  By \eqref{Uniforms}, $t$ must lie at an endpoint of an inteval $I\in\mathcal I$, 
  so that $t$ must lie at point where two intervals from $\mathcal I$ intersect.
  If $t\in [t_{n-1},t_n]\cap [t_n,t_{n+1}]$, then $t = t_n$ implying that $t$ is $\alpha$-exceptional and verifying \eqref{Exceptionals}.
\end{proof}

Finally, we may proceed with the proof of Theorem \ref{StandardDiff}.

\begin{proof}[Proof of Theorem \ref{StandardDiff}]
  If $s$ is $\alpha$-standard, then there exists $\xx\in\mathcal X$ and a neigborhood $I$ of $s$ such that
  \begin{equation} \label{AlphaStandardNear}
    M_t(\alpha) = \|\xx\|_t
  \end{equation}
  for all $t\in I$.  Certainly, the right hand side of \eqref{AlphaStandardNear} is infinitely differentiable as a function of $t$ for all positive $t$.
  
  Assume now that $\mu_\alpha$ is infinitely differentiable at $s$.  By Lemma \ref{Neighborhood}, there exists a neighborhood $(a,b)$ of $s$ such that
  $(a,s]$ and $[s,b)$ are $\alpha$-uniform.  Suppose that $\xx, \yy \in\mathcal X$ are such that $M_t(\alpha) = \|\xx\|_t$ for all $t\in (a,s]$ and
  $M_t(\alpha) = \|\yy\|_t$ for all $t\in [s,b)$.  Now write
  \begin{equation*}
  	f(z) = \|\xx\|_z^z\quad\mathrm{and}\quad g(z) = \|\yy\|_z^z
  \end{equation*}
  and observe that $f$ and $g$ are entire functions.  Moreover, their Taylor series expansions at $s$, given by
  \begin{equation} \label{TaylorExpansions}
  		f(z) = \sum_{n=0}^\infty \frac{f^{(n)}(s)}{n!}(z-s)^n\qquad\mathrm{and}\qquad g(z) = \sum_{n=0}^\infty \frac{g^{(n)}(s)}{n!}(z-s)^n,
  \end{equation}
  converge in all of $\com$.  
  
  For the remainder of this proof, we will write $\ell(t) = \mu_\alpha(t)^t$.  By our assumption, $\ell$ is infinitely differentiable at $s$.  We also have that
  $\ell(t) = f(t)$ for all $t\in(a,s]$ which implies that $\ell$ must also be infinitely differentiable in $(a,s)$.  It follows easily that
  \begin{equation} \label{EqualDerivs}
  	\ell^{(n)}(t) = f^{(n)}(t)\quad\mathrm{for\ all}\ t\in (a,s).
  \end{equation}
  
 We now prove by induction that $f^{(n)}(s) = \ell^{(n)}(s)$.  By the definitions of our functions, we obtain immediately $f(s) = \ell(s)$ establishing the base case.
  Assuming now that $f^{(n)}(s) = \ell^{(n)}(s)$, we may write
  \begin{equation*}
  	\ell^{(n+1)}(s) = \lim_{h\to 0}\frac{\ell^{(n)}(s+h) - \ell^{(n)}(s)}{h} = \lim_{h\to 0^-}\frac{\ell^{(n)}(s+h) - f^{(n)}(s)}{h}.
  \end{equation*}
  However, using \eqref{EqualDerivs}, it follows that $f^{(n)}(s+h) = \ell^{(n)}(s+h)$ for $h$ sufficiently close to $0$.  We now have that
  \begin{equation} \label{FinalDeriv}
  	\ell^{(n+1)}(s) = \lim_{h\to 0^-}\frac{f^{(n)}(s+h) - f^{(n)}(s)}{h}.
  \end{equation}
  We know already that $f$ must be infinitely differentiable at $s$, so that the right hand side of \eqref{FinalDeriv} must equal $f^{(n+1)}(s)$
  establishing our claim that $f^{(n)}(s) = \ell^{(n)}(s)$ for all $n$.
  
  A similar argument can be used to show that $g^{(n)}(s) = \ell^{(n)}(s)$, and therefore $g^{(n)}(s) = f^{(n)}(s)$.  It now follows from \eqref{TaylorExpansions}
  that $f(z) = g(z)$ for all $z\in \com$.  In particular, we have shown that
  \begin{equation*}
  	\mu_\alpha(t) = \|\xx\|_t = \|\yy\|_t
  \end{equation*}
  for all $t\in (a,b)$, proving that $(a,b)$ is $\alpha$-uniform.  It follows that $s$ is $\alpha$-standard.
\end{proof}



\section{Proof of Theorem \ref{RationalFactorization}}

\begin{proof}[Proof of Theorem \ref{RationalFactorization}]
Since we are assuming Conjecture \ref{InfimumRational}, we have that there exist positive integers $r_1,\ldots,r_N,s_1,\ldots s_N$ such that
\begin{equation} \label{RepAsRational}
	\frac{r}{s} = \prod_{n=1}^N \frac{r_n}{s_n}
\end{equation}
and
\begin{equation} \label{FirstRationalRep}
	M_t\left(\frac{r}{s}\right)^t = \sum_{n=1}^N M\left(\frac{r_n}{s_n}\right)^t = \sum_{n=1}^N \max\{|r_n|,|s_n|\}^t
\end{equation}
Suppose that $\gcd(r_i,s_j)>1$ for some $i$ and $j$ so there exists a prime number $p$ such that $p\mid r_i$ and $p\mid s_j$.  Now define points $r'_n$
and $s'_n$, for $1\leq n\leq N$, by
\begin{equation*}
	r'_n = \left\{ \begin{array}{ll}
		r_n & \mathrm{if}\ n\ne i \\
		r_n/p & \mathrm{if}\ n = i
		\end{array}\right.
\end{equation*}
and
\begin{equation*}
	s'_n = \left\{ \begin{array}{ll}
		s_n & \mathrm{if}\ n\ne j \\
		s_n/p & \mathrm{if}\ n = j.
		\end{array}\right.
\end{equation*}
We note immediately that
\begin{equation*}
	\frac{r}{s} = \prod_{n=1}^N \frac{r'_n}{s'_n}
\end{equation*}
and
\begin{equation*} \label{NewBounds}
	\max\{|r'_n|,|s'_n|\} \leq \max\{|r_n|,|s_n|\}
\end{equation*}
for all $n$.  Then using \eqref{FirstRationalRep}, we find that
\begin{align*}
	M_t\left(\frac{r}{s}\right)^t & \leq \sum_{n=1}^N M\left(\frac{r'_n}{s'_n}\right)^t \\
		& =  \sum_{n=1}^N \max\{|r'_n|,|s'_n|\}^t \\
		& \leq \sum_{n=1}^N \max\{|r_n|,|s_n|\}^t \\
		& = M_t\left(\frac{r}{s}\right)^t 
\end{align*}
implying that
\begin{equation*}
	M_t\left(\frac{r}{s}\right)^t = \sum_{n=1}^N M\left(\frac{r'_n}{s'_n}\right)^t 
\end{equation*}
Repeating this process, we can find positive integers $a_1,\ldots,a_N,b_1,\ldots,b_N$ such that
\begin{equation*}
	M_t\left(\frac{r}{s}\right)^t = \sum_{n=1}^N M\left(\frac{a_n}{b_n}\right)^t
\end{equation*}
and each pair $(a_i,b_j)$ are relatively prime.  In particular, we have that
\begin{equation} \label{PrimeProducts}
	\gcd\left( \prod_{n=1}^Na_n,\prod_{n=1}^Nb_n\right) = 1.
\end{equation}

By \eqref{RepAsRational}, we have that
\begin{equation*}
	r\prod_{n=1}^N b_n = s\prod_{n=1}^N a_n.
\end{equation*}
This means that $r \mid s\prod_{n=1}^N a_n$, but since $\gcd(r,s) = 1$, we have that
\begin{equation} \label{FirstDivide}
	r\mid \prod_{n=1}^N a_n.
\end{equation}
However, we also know that $\prod_{n=1}^n a_n \mid r\prod_{n=1}^N b_n$, so that by \eqref{PrimeProducts}, we obtain
\begin{equation*}
	\prod_{n=1}^N a_n \mid r.
\end{equation*}
Combining this with \eqref{FirstDivide}, we find that
\begin{equation*}
	\prod_{n=1}^N a_n = r.
\end{equation*}
A similar argument can be used to prove that $\prod_{n=1}^N b_n = s$ which completes the proof.

\end{proof}

Finally, we provide our proof of Theorem \ref{Integers}.

\begin{proof}[Proof of Theorem \ref{Integers}]
  First assume that $t\leq 1$.  It was shown in \cite{DubSmyth2} that
  $M_1(\alpha) = M(\alpha)$ whenever $\alpha$ is rational.  Using the fact that $\mu_\alpha$ is decreasing, we
  have that
  \begin{equation*}
    M(\alpha) = M_1(\alpha) \leq M_t(\alpha) \leq M(\alpha).
  \end{equation*}
  But $M(\alpha) = \log \alpha$ so the result follows for $t\leq 1$.

  Now suppose that $t>1$.  By Theorem \ref{RationalFactorization}, there exist integers
  $k_1,\ldots k_N$ such that $\alpha = k_1\cdots k_n$ and
  \begin{equation} \label{StartingFactorization}
    M_t(\alpha)^t = \sum_{n=1}^N M(k_n)^t = \sum_{n=1}^N (\log k_n)^t.
  \end{equation}
  We claim that each $k_n$ must be prime.  To see this, assume there
  exists an integer $j$ such that $k_j$ is not prime and write
  \begin{equation*}
    k_j = ab
  \end{equation*}
  where $a,b\in \nat$ and $a,b > 1$.  It is a straightforward application of the Mean Value Theorem to show that
  \begin{equation*}
    (\log k_j)^t = (\log a +\log b)^t > (\log a)^t + (\log b)^t.
  \end{equation*}
  Applying \eqref{StartingFactorization}, we find that
  \begin{equation} \label{NextFactorization}
    M_t(\alpha)^t > (\log a)^t + (\log b)^t + \sum_{\substack{n=1\\ n\ne j}}^N (\log k_n)^t.
  \end{equation}
  However, we also have that
  \begin{equation*}
    \alpha = ab\cdot\prod_{\substack{n=1\\ n\ne j}}^N k_n
  \end{equation*}
  which yields immediately
  \begin{equation*}
    M_t(\alpha)^t \leq M(a)^t + M(b)^t + \sum_{\substack{n=1\\ n\ne j}}^N
    M(k_n)^t =  (\log a)^t + (\log b)^t + \sum_{\substack{n=1\\ n\ne j}}^N (\log k_n)^t
  \end{equation*}
  contradicting \eqref{NextFactorization}.  We have now shown that
  each $k_n$ must be prime completing the proof.
\end{proof}

\end{document}